\newcommand{\A}{\boldsymbol{A}}
\newcommand{\mean}[1]{\,-\hskip-1.08em\int_{#1}} 
\newcommand{\Leb}[1]{\mathcal{L}^{#1}} 
\newcommand{\B}{\boldsymbol B}
\newcommand{\bder}{\boldsymbol b}
\def\bbbr{\mathbb{R}}
\def\R{\mathbb{R}}
\newcommand{\eeta}{\boldsymbol \eta}
\def\u{{u}}
\def\uu{{\bf u}}
\def\z{{\bf z}}
\def\DM{{\mathcal{DM}^{\infty}}}
\def\v{{\bf v}}
\def\uh{\hat{u}}
\def\HH{{\bf{A}}}
\def\w{{t}}
\def\Xi{\chi^*_{[a_i, a_{i+1}]}}
\DeclareMathOperator{\diver}{div}
\DeclareMathOperator{\Div}{div}
\newcommand{\defeq} {:=}
\newcommand{\medint}{-\kern  -,375cm\int}
\newcommand{\medintinrigo}{-\kern  -,315cm\int}
 \newcommand{\hh}{{\mathcal H}^{N-1}}
\newcommand{\LLN}{{\mathcal L}^N}
\newcommand{\LLU}{{\mathcal L}^1}
\newcommand{\M}[1]{\mathcal{#1}}    
\renewcommand{\H}{\M{H}}
\newcommand{\Haus}[1]{{\mathcal H}^{#1}} 
\newcommand{\res}{\mathop{\hbox{\vrule height 7pt width .5pt depth 0pt
\vrule height .5pt width 6pt depth 0pt}}\nolimits} 
\def\pscal#1#2{\left\langle #1\,,\, #2 \right\rangle}
\DeclareMathOperator{\sign}{sign}
\def\ut{\widetilde{u}}
\newtheorem{definition}{Definition}[section]
\newtheorem{lemma}[definition]{Lemma}
\newtheorem{theorem}[definition]{Theorem}
\newtheorem{proposition}[definition]{Proposition}
\newtheorem{corollary}[definition]{Corollary}
\theoremstyle{remark}
\newtheorem{remark}[definition]{Remark}
\def\@settitle{\begin{center}%
		\baselineskip14\p@\relax
		\bfseries
		\uppercasenonmath\@title
		\@title
		\ifx\@subtitle\@empty\else
		\\[5ex]
		\normalsize\mdseries\@subtitle
		\fi
	\end{center}%
}
\def\subtitle#1{\gdef\@subtitle{#1}}
\def\@subtitle{}
\begin{document}
\title[On the chain rule formula for divergences]
{On the chain rule formulas for divergences\\ and applications to conservation laws}

\subtitle{On the occasion of the 60th birthday of Nicola Fusco}

\author[G.~Crasta]{Graziano Crasta}
\address{Dipartimento di Matematica ``G.\ Castelnuovo'', Univ.\ di Roma I\\
	P.le A.\ Moro 2 -- I-00185 Roma (Italy)}
\email{crasta@mat.uniroma1.it}
\author[V.~De Cicco]{Virginia De Cicco}
\address{Dipartimento di Scienze di Base  e Applicate per l'Ingegneria, Univ.\ di Roma I\\
	Via A.\ Scarpa 10 -- I-00185 Roma (Italy)}
\email{virginia.decicco@sbai.uniroma1.it}

\keywords{Chain rule, divergence}
\subjclass[2010]{26A45,35L65}
\date{\today}

\begin{abstract}
In this paper we prove a nonautonomous chain rule formula for the
distributional divergence of the
composite function $\v(x)=\B(x,u(x))$,
where $\B(\cdot,t)$ is a divergence--measure vector field
and $u$ is a function of bounded variation.
As an application, we prove a uniqueness result for scalar conservation laws
with discontinuous flux.
\end{abstract}

\maketitle

\section{Introduction}

Nonautonomous chain rules formulas in $BV$ 
have been successfully used in the study of
semicontinuity properties of integral functionals
(see \cite{DC,dcfv,DCFV2,dcl}) and
conservation laws with discontinuous flux of the form
\begin{equation}\label{f:sc1}
u_t + \Div \B(x, u) = 0,
\qquad (t,x)\in (0,+\infty)\times\R^N
\end{equation}
(see \cite{CD,CDD,CDDG}
and also \cite{vol,vol1} in the autonomous case).
In this paper we shall restrict our attention only to this second kind of application.

In order to clarify the connection between chain rule formulas and
uniqueness results for the Cauchy problems
associated with \eqref{f:sc1},
it will be convenient to recall some previous results.

In \cite{CDD} the authors considered a flux $\B$ such that
$\B(\cdot,z)$ is a special function of bounded variation (SBV)
and of class $C^1$ with respect to the second variable.
A uniqueness result for~\eqref{f:sc1} is then obtained in the class of $BV$ functions
by using the chain rule formula proven in \cite{ACDD} for the
composite function $\v(x) := \B(x, u(x))$.
(For the sake of completeness we recall that,
under the same structural hypotheses on the flux, 
a similar uniqueness result has been recently obtained in \cite{CDDG}  
for weak entropy solutions, without the $BV$ regularity requirement.)

On the other hand, Panov proved in \cite{Pan1} 
an existence result of entropy solutions 
in the case of discontinuous fluxes $\B(x,z)$ such that $\B(\cdot,z)$
is a vector field whose distributional divergence $\Div_x \B(\cdot, z)$
is a measure (see \cite{Anz,cf,ChenTorres} for a general theory of bounded divergence-measure vector field).
This assumption on $\Div_x \B(\cdot, z)$, 
rather than requiring $\B(\cdot, z) \in SBV$,
is indeed natural when looking for entropy solutions of \eqref{f:sc1}.

The structure of the proof of the uniqueness result in \cite{CDD}
can be adapted to this more general situation, 
provided that one can prove a 
suitable chain rule formula.
This is exactly the aim of this paper:
in Section~\ref{s:div} we shall prove a 
nonautonomous chain rule formula for 
the divergence of the vector field
$\v(x) := \B(x, u(x))$,
where $\B(\cdot, t)$ is a
divergence--measure vector field, 
of class $C^1$ with respect to the second variable,
and $u\colon\R^N\to\R$ is a function of bounded variation.
Then, we can mimic the proof in \cite{CDD} in order to obtain,
under these assumptions on $\B$,
a uniqueness result for $BV$ solutions
of the Cauchy problems associated with \eqref{f:sc1}
(see Section~\ref{s:cons}).
We stress that this is not a genuine well-posedness
result, since uniqueness of solutions has been proven
in a class of functions 
which is smaller
than the one for which existence
has been obtained by Panov.

\medskip
Before stating our results in a more precise way,
let us recall the state of the art
about chain rule formulas,
starting from the autonomous (i.e., independent of $x$) case.

The first result concerning distributional derivatives
is the one proved by
Vol'pert in \cite{vol}  (see also \cite{vol1}), in view of applications to the study of quasilinear hyperbolic equations.
He established a chain rule formula for distributional derivatives of the composite function $v(x)=B(u(x))$, where $u:\Omega\to\R$ has bounded variation in the open subset $\Omega$ of $\R^{N}$ and $B:\R\to\R$ is continuously differentiable. 
He proved that $v$ has bounded variation and its distributional derivative 
$Dv$ (which is a Radon measure on $\Omega$) admits an explicit representation in terms of the classical derivative $B'$ and of the distributional derivative $Du$\,.
More precisely, the following equality holds  
\begin{equation}\label{chainDv}
Dv= B' (u)\nabla u \ \mathcal L^{N}+ B'(\widetilde u) D^cu+[B(u^+)-B(u^-)]\, \nu_u\, \mathcal H^{N-1}
\res{J_{u}}\,,
\end{equation}
in the sense of measures,
where
\[
Du=\nabla u \ \mathcal L^{N}+D^cu+
(u^+ - u^-) 
\nu_u\, \mathcal H^{N-1}
\res{J_{u}}\,
\]
is the decomposition of $Du$ into its absolutely continuous part 
$\nabla u\, \LLN$ with respect to the Lebesgue measure $\mathcal L^{N}$, its Cantor part $D^cu$ and its jump part, which in turn is a measure concentrated on
the $\mathcal{H}^{N-1}$--rectifiable jump set $J_u$ of $u$. 
Here, $\nu_u$ denotes the measure theoretical unit normal to $J_u$, $\widetilde u$ is the approximate limit 
of $u$ and $u^+$, $u^-$ are the traces of $u$ on $J_u$.
(Here and in the following we refer to Chapter~3 of \cite{AFP} for notations and the basic facts
concerning $BV$ functions.)

An identity similar to \eqref{chainDv} holds also in the vectorial case (see Theorem 3.96 in \cite{AFP}), namely when $\uu:\R^{N}\to\R^{h}$ has bounded variation  and $B:\R^{h}\to\R$ is continuously differentiable. 
In this case, \eqref{chainDv} can be written as
\begin{equation}\label{chainDv2}
D v=\nabla B(\uu)\nabla \uu \ \mathcal L^{N} + \nabla B(\widetilde \uu) D^c\uu+
[B(\uu^+)-B(\uu^-)] \nu_\uu\, \mathcal H^{N-1}
\res{J_{\uu}}\,.
\end{equation}

A further extension, that we are not going to use in the present paper,
concerns the case when $B$ is only a Lipschitz continuous function.
In this case, a general form of the formula was proved by Ambrosio and Dal Maso in  \cite{adm} 
(see also \cite{LM}, Theorem 3.99 in \cite{AFP} for the scalar case
and \cite{DCinpreparation} for the nonautonomous case).

\medskip
Recently, analogous chain rule formulas have been obtained
in the case of an explicit dependence with respect to the space variables $x$, especially in view of applications to semicontinuity results for convex integral nonautonomous
functionals (see \cite{DC,dcfv,DCFV2,dcl}) and to conservation laws with discontinuous flux (see \cite{CD,CDD,CDDG}). 
This amounts to describe the
distributional derivative of the composite function $\v(x)=\B(x,\uu(x))$, 
where $\B(x,\cdot)$ is continuously differentiable and
$\B(\cdot,\z)$ and $\uu$ are functions with low regularity 
(which will be specified later).
These formulas contain another derivation term due to the presence of the explicit dependence on $x$. 

In the case of $\uu$ and $\B$ regular functions, the classical chain rule formula 
\[
\nabla \v(x)=\nabla_{x}\B (x,\uu(x))+ \nabla_{\z}\B(x,\uu(x))\cdot\nabla \uu(x)\,,\quad x\in\R^{N}\,,
\]
is a pointwise identity and the derivatives here occurring are the classical ones.

Clearly, when $\B(\cdot, \z)$ or $\uu$ (or both) are not regular functions,
then also $\v$ need not be regular and a number of extra terms will appear,
as in the previous formulas \eqref{chainDv} and \eqref{chainDv2}.

In the main result of the paper,
stated in Theorem~\ref{chainb3}, we assume that
$h=1$, the function $\B(x,\cdot)$ is $C^1$, 
$\Div_x\B(\cdot,z)$ 
is a Radon measure,
and $u$ is a scalar function of bounded variation. 
As we shall see, in order to obtain the formula in such generality,
we need to assume \textsl{a-priori} the existence of the strong traces $\B^\pm(\cdot, z)$ of $\B(\cdot, z)$
on a $\mathcal{H}^{N-1}$--rectifiable universal singular set 
$\mathcal{N}$, independent of $z$ (see Section~\ref{s:div}).
Nevertheless, we think that this restriction should not be
too much severe in view of applications to conservation laws with
discontinuous flux (see Section~\ref{s:cons} and Remark~\ref{r:triang}).
Clearly, in this case one can expect a chain
rule formula only for the divergence of the
composite function \(\v(x) = \B(x, u(x)))\).
Namely,
we shall prove that
the distributional divergence of $\v$
is given by
\begin{equation}\label{lolo5}
\begin{split}
\Div \v(x))=&\left.\Div_x\B(x,t)\right|_{t=u(x)}\\
+&\pscal{\bder(x, \ut(x))}{\widetilde Du}
+\pscal{
	\B^*(x,u^+)
	-\B^*(x,u^-)
}{\nu_u}
\hh\res {\mathcal N\cup J_u}
\end{split}
\end{equation}
in the sense of measure, where $\B^*(x,z)=\frac12[\B^+(x,z)+\B^-(x,z)]$ and the measure $\Div_x\B(x,z)$, depending on the parameter $z$, is computed in ${z=u(x)}$ in a suitable sense (see Remark \ref{remrem}).
The proof is based on the regularization argument used in \cite{DCFV2}. 

We shall see that, when $u$ is a function of bounded variation,
this explicit formula for $\Div \v$ can be used to obtain uniqueness results 
for \eqref{f:sc1}.
This improves the analogous uniqueness result previously obtained in \cite{CDD},
using the chain rule formula for $\v$,  
assuming $\B(\cdot, z)$ a special function of
bounded variation. 
We emphasize that, interestingly enough,
the proof of the uniqueness result of \cite{CDD}
can be retraced with minor modifications in
this new setting, since what is really needed
is only the chain rule formula for
$\Div \v$.

\medskip
The structure of the paper is the following.
First of all, in Sections~\ref{s:ureg} and~\ref{s:BV} we review
some known chain rule formulas that we believe will help the
reader understanding the more general formula of Section~\ref{s:div}.

More precisely, in Section~\ref{s:ureg} we recall the results proved
in the case $\B(\cdot, \z)$ not regular,
but $\uu$ regular enough (i.e., in the Sobolev space $W^{1,1}$).
These results have been proved in \cite{dcl} and \cite{DCFV2}, assuming   
that $\Div_x \B(\cdot, \z)$ belongs to $L^1$ or 
to the space of Radon measures respectively.

In Section~\ref{s:BV} we review the chain rule formulas in the case
when both $\B(\cdot, \z)$ and $\uu$ are of bounded variation, 
recalling the results proved in \cite{DCFV2} and \cite{ACDD}
in the case of scalar and vector functions respectively.

In Section~\ref{s:div} we prove the main result of the paper,
stated in Theorem~\ref{chainb3}, 
concerning the distributional divergence of the composite function $\v(x)=\B(x,u(x))$.

Finally, in Section~\ref{s:cons}
we consider some applications of Theorem~\ref{chainb3}
to the uniqueness issue for the Cauchy problems
for multidimensional scalar conservation laws with discontinuous flux.

\section{Nonautonomous chain rules for $u\in W^{1,1}$}
\label{s:ureg}

As we have already said in the Introduction,
this section and the next one are devoted to the review
of some known chain rule formulas.

In the following, $\Omega$ will always denote a nonempty open subset
of $\R^N$.

\subsection{Vectorial case $\mathbf{u}\in W^{1,1}(\Omega;\mathbb{R}^{h})$}
The first formula of this type is established in \cite{dcl} for functions $u\in W^{1,1}(\R^{N};\R^{h})$ by assuming that, for every $\z\in \R^{h}$, $\B(\cdot,\z)$ is an $L^1$ function whose distributional divergence belongs to $L^1$. 
(In particular, this condition holds if $\B(\cdot,\z)\in W^{1,1}(\R^{N};\R^{h})$.)

Let us consider the space
\[
L^{1}({\operatorname*{div};}\Omega)=\left\{  \mathbf{u}\in L^{1}(\Omega
;\mathbb{R}^{N}):\operatorname*{div}\mathbf{u}\in L^{1}(\Omega)\right\}  ,
\]
where $\operatorname*{div}\mathbf{u}$ denotes the distributional divergence of \(\uu\).

We recall that an $\mathcal{H}^{N-1}$--measurable set $A\subset\R^N$ is said to be 
countably $\mathcal{H}^{N-1}$--rectifiable 
if it can be covered, up to a set of vanishing $\mathcal{H}^{N-1}$ measure,
by a sequence of $C^1$ hypersurfaces.
The set $A$ is said to be purely $(N-1)$-unrectifiable
if $\Haus{N-1}(A\cap\Gamma)=0$ whenever $\Gamma$ is countably
$\Haus{N-1}$-rectifiable.

\begin{theorem}
[$L^{1}({\operatorname*{div};}\Omega)$-dependence]\label{weak theorem}
Let $\B:\Omega\times\mathbb{R}^{h}\rightarrow\mathbb{R}^{N}$ be a Borel function. 
Assume
that there exist an $\mathcal{L}^{N}$-null set $\mathcal{N}_0\subset\Omega$ and
a purely $\mathcal{H}^{1}$-unrectifiable set $\mathcal{M}\subset\mathbb{R}%
^{h}$ such that

\begin{enumerate}
\item[(i)] for all $\mathbf{z}\in\R^{h}$ the function 
$\B\left(
\cdot,\mathbf{z}\right)  \in L_{\operatorname*{loc}}^{1}({\operatorname*{div}
;}\Omega)$;

\item[(ii)] for all $x\in\Omega\setminus\mathcal{N}_0$ the function
$\operatorname*{div}\nolimits_{x}\B(x,\cdot)$ is approximately continuous in
$\mathbb{R}^{h}$;

\item[(iii)] for all $x\in\Omega\setminus\mathcal{N}_0$ the function $\B\left(
x,\cdot\right)  $ is differentiable in $\R^{h}\setminus\mathcal{M}$
and approximately continuous in $\mathcal{M}$;

\item[(iv)] for every $\Omega^{\prime}\times D\subset\subset\Omega
\times\mathbb{R}^{h}$ there exist $g\in L^{1}(\Omega)$ and $L>0$ such
that
\[
\left|  \B(x,\mathbf{z})\right|  +\left|  \operatorname*{div}\nolimits_{x}%
\B(x,\mathbf{z})\right|  \leq g(x)
\]
for $\mathcal{L}^{N}$--a.e.\ $x\in\Omega^{\prime}$ and for all
$\mathbf{z}\in D,$ and
\[
\left|  \nabla_{\mathbf{z}}\B(x,\mathbf{z})\right|  \leq L
\]
for $\mathcal{L}^{N}$--a.e.\ $x\in\Omega^{\prime}$ and for all
$\mathbf{z}\in D\setminus\mathcal{M}$.
\end{enumerate}

Then for every $\uu\in W^{1,1}(\Omega;\mathbb{R}^{h})\cap
L_{\operatorname*{loc}}^{\infty}(\Omega;\mathbb{R}^{h})$ the function
$\mathbf{v}:\Omega\rightarrow\mathbb{R}^{N}$, defined by
\[
\mathbf{v}(x)=\B(x,\uu(x))\quad x\in\Omega,
\]
belongs to $L_{\operatorname*{loc}}^{1}({\operatorname*{div};}\Omega)$ and
\[
\operatorname*{div}\mathbf{v}(x)=\operatorname*{div}\nolimits_{x}%
\B(x,\uu(x))+\operatorname*{tr}\left(  \nabla_{\z}%
\B(x,\uu(x))\,\nabla\uu(x)\right)
\]
for $\mathcal{L}^{N}$--a.e.\ $x\in\Omega$, provided 
$\nabla_{\z}\B(x,\mathbf{u}(x))\,\nabla\mathbf{u}(x)$ is interpreted to be zero whenever
$\nabla\mathbf{u}(x)=0$, irrespective of whether $\nabla_{\z}\B(x,\mathbf{u}(x))$ is defined.
\end{theorem}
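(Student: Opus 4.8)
The plan is to prove the formula by a mollification argument in the $x$-variable, following the scheme of \cite{DCFV2}. For $\eps>0$ let $\B_\eps(x,\z) := (\B(\cdot,\z)*\rho_\eps)(x)$, where $\rho_\eps$ is a standard symmetric mollifier; then $\B_\eps(\cdot,\z)$ is smooth in $x$ and, by hypotheses (i) and (iv), $\B_\eps(\cdot,\z)\to\B(\cdot,\z)$ in $L^1_{\mathrm{loc}}(\Omega)$ and $\Div_x\B_\eps(\cdot,\z) = (\Div_x\B(\cdot,\z))*\rho_\eps \to \Div_x\B(\cdot,\z)$ in $L^1_{\mathrm{loc}}(\Omega)$, with the bounds in (iv) passing to $\B_\eps$ uniformly in $\eps$. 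Since $\B(x,\cdot)$ is differentiable off $\M$ with $|\nabla_\z\B|\le L$ and approximately continuous on $\M$, and $\M$ is purely $\Haus1$-unrectifiable, the same regularity in $\z$ is inherited by $\B_\eps$ (differentiability off $\M$, Lipschitz constant $L$, approximate continuity on $\M$), so the composite $\v_\eps(x):=\B_\eps(x,\uu(x))$ is well defined.

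Next I would establish the formula at the regularized level. For fixed $\eps$, since $\B_\eps$ is $C^1$ in $x$ and Lipschitz in $\z$, and $\uu\in W^{1,1}\cap L^\infty_{\mathrm{loc}}$, one can apply a standard Sobolev chain rule (the vectorial version of the $W^{1,1}$ chain rule, e.g. the argument in Theorem~3.96 of \cite{AFP} combined with differentiation of parametrized integrals — here one must be careful that $\M$ is $\Leb h$-negligible in $\R^h$, so $\nabla\uu = 0$ $\Leb N$-a.e.\ on $\{\uu\in\M\}$, which is exactly why the stated convention "interpret $\nabla_\z\B\,\nabla\uu$ as zero when $\nabla\uu=0$" makes the term meaningful) to obtain
\[
\Div\v_\eps(x) = \Div_x\B_\eps(x,\uu(x)) + \operatorname{tr}\bigl(\nabla_\z\B_\eps(x,\uu(x))\,\nabla\uu(x)\bigr)
\qquad \Leb N\text{-a.e. } x\in\Omega.
\]
Then I would pass to the limit $\eps\to 0$ in the sense of distributions. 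The left-hand side: $\v_\eps\to\v$ in $L^1_{\mathrm{loc}}$ (dominated convergence using (iv) and $\uu\in L^\infty_{\mathrm{loc}}$, plus $\B_\eps(x,\cdot)\to\B(x,\cdot)$ pointwise for a.e.\ $x$), hence $\Div\v_\eps\to\Div\v$ in $\mathcal D'$, and since the right-hand side will be shown to converge in $L^1_{\mathrm{loc}}$, $\v$ indeed lies in $L^1_{\mathrm{loc}}(\Div;\Omega)$. For the first term on the right: one needs $\Div_x\B_\eps(x,\uu(x))\to\Div_x\B(x,\uu(x))$ in $L^1_{\mathrm{loc}}$; this is where hypothesis (ii), approximate continuity of $x\mapsto\Div_x\B(x,\cdot)$ in $\z$, enters, together with the fact that $\uu$ is a.e.\ an approximate-continuity point argument — more precisely one uses that $\Div_x\B(\cdot,\uu(\cdot))$ is measurable and that mollification in $x$ commutes with the composition in the limit because of the $\z$-regularity; a Scorza-Dragoni/Lusin-type argument reduces to uniform convergence on large sets. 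For the second term: $\nabla_\z\B_\eps(x,\uu(x))\,\nabla\uu(x)\to\nabla_\z\B(x,\uu(x))\,\nabla\uu(x)$ in $L^1_{\mathrm{loc}}$, using $|\nabla_\z\B_\eps|\le L$ (dominated convergence, with $\nabla\uu\in L^1$) and the convergence $\nabla_\z\B_\eps(x,\z)\to\nabla_\z\B(x,\z)$ for a.e.\ $x$ at every $\z\notin\M$; on the set $\{\uu(x)\in\M\}$ both sides vanish since $\nabla\uu=0$ there.

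The main obstacle I anticipate is the rigorous justification of the convergence $\Div_x\B_\eps(x,\uu(x))\to\Div_x\B(x,\uu(x))$ in $L^1_{\mathrm{loc}}$: a priori $\Div_x\B(\cdot,\z)$ is only an $L^1$ function of $x$ for each fixed $\z$, so composing with the (merely $L^\infty\cap W^{1,1}$) function $\uu(x)$ and commuting this with $x$-mollification is delicate. The assumptions (ii) — approximate continuity in the $\z$-variable — and (iv) — the uniform local bound $g\in L^1$ — are precisely tailored to handle this: the strategy is to fix a compact $D\supset\uu(\Omega')$, invoke a Scorza–Dragoni theorem to find, for any $\delta>0$, a compact set $K_\delta\subset\Omega'$ with $\Leb N(\Omega'\setminus K_\delta)<\delta$ on which $(x,\z)\mapsto\Div_x\B(x,\z)$ is continuous, control the contribution off $K_\delta$ by the equi-integrable majorant $g$, and on $K_\delta$ exploit uniform continuity together with the $L^1$-convergence $\Div_x\B_\eps(\cdot,\z)\to\Div_x\B(\cdot,\z)$ (uniformly in $\z\in D$ via (iv)) to close the estimate. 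A secondary, purely bookkeeping point is to verify that the $\z$-regularity of $\B$ is genuinely preserved under $x$-mollification, i.e.\ that $\nabla_\z\B_\eps(x,\z) = (\nabla_\z\B(\cdot,\z)*\rho_\eps)(x)$ for $\z\notin\M$; this follows by differentiating under the integral sign using the uniform Lipschitz bound on $\R^h\setminus\M$ and the fact that $\Haus 1(\M)$-type negligibility makes the exceptional set irrelevant in the integration.
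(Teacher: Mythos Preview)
The paper does not give its own proof of this theorem: it is stated in the review Section~2 and attributed to \cite{dcl} (De~Cicco--Leoni). Your mollification-in-$x$ scheme, followed by a chain rule at the regularized level and passage to the limit via dominated convergence together with a Scorza--Dragoni argument for the term $\Div_x\B_\eps(x,\uu(x))$, is indeed the strategy of the original proof in \cite{dcl}, and you have correctly isolated the principal difficulty.

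There is, however, one substantive error in your justification. You write that ``$\mathcal M$ is $\Leb h$-negligible in $\R^h$, so $\nabla\uu = 0$ $\Leb N$-a.e.\ on $\{\uu\in\mathcal M\}$''. Neither claim is correct in general: a purely $\H^1$-unrectifiable set in $\R^h$ need not have zero $\Leb h$-measure when $h\ge 2$, and even when the target set is $\Leb h$-null the conclusion $\nabla\uu=0$ a.e.\ on the preimage is false for vector-valued maps (take $\uu(x_1,x_2)=(x_1,0)$ and $\mathcal M=\R\times\{0\}$). The conclusion $\nabla\uu=0$ $\Leb N$-a.e.\ on $\{\uu\in\mathcal M\}$ \emph{is} true, but it relies precisely on the pure $\H^1$-unrectifiability: at a point of approximate differentiability with $\nabla\uu(x)\neq 0$, restriction of $\uu$ to suitable lines through $x$ yields absolutely continuous curves in $\R^h$ whose images are $\H^1$-rectifiable, and a Fubini argument then forces $\{\uu\in\mathcal M,\ \nabla\uu\neq 0\}$ to be $\Leb N$-null. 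This is one of the key technical lemmas in \cite{dcl}, and it is exactly the reason for the otherwise unusual hypothesis on $\mathcal M$.
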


\subsection{Scalar case $u\in W^{1,1}(\Omega)$}
An important special case is given when $\B$ has the form
\begin{equation}
\B(x,u)=\int_{0}^{u}\bder(x,s)\,ds, \label{A}%
\end{equation}
where $\bder\colon\Omega\times\mathbb{R}\rightarrow\mathbb{R}^{N}$.
Clearly, when $\B$ is of class $C^1$ with respect to the second variable,
it is not a real restriction assuming that it is of the form \eqref{A}
for some vector field $\bder$ which is continuous with respect to the same variable.
On the other hand, some assumptions in Theorems~\ref{chainb3reg} 
and~\ref{chainb2} below can be stated in a more polished way in terms of $\bder$.

In this case a formula has been established in \cite{DCFV2} (see Theorem 3.4) for scalar functions 
$u\in W^{1,1}(\Omega)$ by assuming that, for every $t\in \R$, $\B(\cdot,t)$ is an $L^\infty_{\rm{loc}}$ function whose distributional divergence is a Radon measure. 

In the following, we shall denote by $\DM(\Omega)$ the space of all
vector fields belonging to $L^\infty(\Omega;\R^N)$ whose divergence
in the sense of distribution is a Radon measure with finite total variation. 

\begin{theorem}[$\mathcal{DM}^{\infty}$-dependence]\label{chainb3reg}
Let 
$\bder:\Omega\times\bbbr\rightarrow\bbbr^N$ be a locally
bounded Borel function. Assume that
\begin{enumerate}
\item[(i)]  for ${\mathcal L}^N$-a.e.\ $x\in
\Omega$ the function $\bder(x,\cdot)$ is continuous in $\bbbr$;
\item[(ii)] for $\LLU$-a.e.\ $t\in
\bbbr$ the function $\bder(\cdot,t) $ belongs to $\mathcal{DM}^{\infty}(\Omega)$;
\item[(iii)] for any compact set $H\subset\bbbr$,
$$
\int_H|{\rm div}_x\bder(\cdot,t)|(\Omega)\,dt<+\infty\,.
$$
\end{enumerate}
Then, for every $u\in W^{1,1}(\Omega)\cap L^{\infty}(\Omega)$,
the function $\v:\Omega\rightarrow\bbbr^N$,
defined by
\[
\v(x):= \B(x, u(x)) = \int_{0}^{u(x)}\!\bder(x,t)\,dt\,,
\]
belongs to $\mathcal{DM}^{\infty}(\Omega)$ and for any
$\phi\in C_0^1(\Omega)$ we have
\[
\begin{split}
\int_\Omega\pscal{\nabla\phi(x)}{\v(x)}dx=-&\int_{-\infty}^{+\infty}\!\!dt\!\!\int_\Omega{\rm 
sgn}(t)\chi^*_{\Omega_{u,t}}\phi(x)d\diver_{x}\bder(x,t)
\\
-&\int_\Omega\phi(x)\pscal{\bder(x,u(x))}{\nabla 
u(x)}dx
\end{split}
\]
where 
\begin{equation}
\label{f:out}
\Omega_{u,t} :=
\left\{x\in\Omega:\ \text{$t$ belongs to the segment of
endpoints $0$ and $u(x)$}\right\} 
\end{equation}
and for a.e. $t$ the function $\chi^*_{\Omega_{u,t}}$ is the precise representative of the $BV$ function
$\chi_{\Omega_{u,t}}$.
\end{theorem}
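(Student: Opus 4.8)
The plan is to prove Theorem~\ref{chainb3reg} by a smoothing argument in the $x$--variable, following the scheme of \cite{DCFV2}. First I would fix a mollifier $(\rho_\eps)_{\eps>0}$ on $\R^N$ and set $\bder_\eps(x,t) := (\bder(\cdot,t)*\rho_\eps)(x)$ on $\Omega_\eps := \{x\in\Omega : \operatorname{dist}(x,\partial\Omega)>\eps\}$; by hypotheses (ii) and (iii) we have, for a.e.\ $t$, $\diver_x\bder_\eps(\cdot,t) = (\diver_x\bder(\cdot,t))*\rho_\eps$ in $\Omega_\eps$, and $\int_H|\diver_x\bder_\eps(\cdot,t)|(\Omega')\,dt$ stays bounded for $\Omega'\subset\subset\Omega$. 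Since $\bder_\eps(x,\cdot)$ inherits continuity from hypothesis (i) for a.e.\ $x$, and is smooth in $x$, the classical chain rule gives, for $u\in W^{1,1}\cap L^\infty$ and $\v_\eps(x):=\int_0^{u(x)}\bder_\eps(x,t)\,dt$, the pointwise identity
\[
\diver\v_\eps(x) = \int_0^{u(x)}\diver_x\bder_\eps(x,t)\,dt
+ \pscal{\bder_\eps(x,u(x))}{\nabla u(x)}
\]
for a.e.\ $x\in\Omega_\eps$. Testing against $\phi\in C_0^1(\Omega)$ (so $\operatorname{supp}\phi\subset\Omega_\eps$ for $\eps$ small) and using Fubini on the first term yields
\[
\int_\Omega\pscal{\nabla\phi}{\v_\eps}\,dx
= -\int_{-\infty}^{+\infty}\!dt\!\int_\Omega\operatorname{sgn}(t)\chi_{\Omega_{u,t}}(x)\phi(x)\,d\diver_x\bder_\eps(x,t)
- \int_\Omega\phi(x)\pscal{\bder_\eps(x,u(x))}{\nabla u(x)}\,dx,
\]
where the sign and the rewriting in terms of $\chi_{\Omega_{u,t}}$ come from $\int_0^{u(x)}(\cdots)\,dt = \operatorname{sgn}(u(x))\int_{\{t\in\Omega_{u,t}\}}(\cdots)\,dt$ and the observation that $x\in\Omega_{u,t}$ forces $\operatorname{sgn}(t)=\operatorname{sgn}(u(x))$.

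The second step is passing to the limit $\eps\to0^+$ in this identity. On the left, $\bder_\eps(\cdot,t)\to\bder(\cdot,t)$ in $L^1_{\rm loc}$ for a.e.\ $t$ with a uniform $L^\infty$ bound (local boundedness of $\bder$), hence $\v_\eps\to\v$ in $L^1_{\rm loc}(\Omega;\R^N)$ and also weakly-$*$ in $L^\infty$, so $\int\pscal{\nabla\phi}{\v_\eps}\to\int\pscal{\nabla\phi}{\v}$; this also shows $\v\in\DM^\infty(\Omega)$ since the right-hand sides are bounded uniformly in $\eps$ by hypothesis (iii) and $\nabla u\in L^1$. For the last term, dominated convergence applies once we know $\bder_\eps(x,u(x))\to\bder(x,u(x))$ for a.e.\ $x$: this follows because for a.e.\ $x$ we have both $\bder_\eps(x,t)\to\bder(x,t)$ for a.e.\ $t$ (Lebesgue points in $x$, for a.e.\ fixed $t$, then Fubini) and continuity of $\bder(x,\cdot)$, a standard argument showing mollification commutes with evaluation at a continuity point; the $L^\infty$ bound provides domination against $|\phi||\nabla u|\in L^1$.

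The main obstacle — and the step I would treat most carefully — is the convergence of the first term on the right, $\int dt\int\operatorname{sgn}(t)\chi_{\Omega_{u,t}}\phi\,d\diver_x\bder_\eps(\cdot,t)$. Here one cannot simply pass to the limit pointwise in $t$, because the integrand involves the characteristic function $\chi_{\Omega_{u,t}}$, which is merely $BV$ in $x$ and only defined a.e., so pairing it with the measure $\diver_x\bder(\cdot,t)$ requires choosing the precise representative $\chi^*_{\Omega_{u,t}}$. I would argue as follows: for fixed $t$ (outside a null set), $\diver_x\bder_\eps(\cdot,t) = (\diver_x\bder(\cdot,t))*\rho_\eps \weakstarto \diver_x\bder(\cdot,t)$ as measures, with $|\diver_x\bder_\eps(\cdot,t)|(\Omega')$ uniformly bounded; combined with the strong $L^1_{\rm loc}$-convergence $\chi_{\Omega_{u,t}}\!*\rho_\eps \to \chi_{\Omega_{u,t}}$ and the absolute continuity of the limit pairing (using that $|\diver_x\bder(\cdot,t)|$ charges the Borel set on which $\chi^*_{\Omega_{u,t}}$ is ambiguous only through its value there, which is the content of the precise-representative convention), one gets
\[
\int_\Omega \phi(x)\chi_{\Omega_{u,t}}\!*\rho_\eps(x)\,d\big((\diver_x\bder(\cdot,t))*\rho_\eps\big)(x)
\longrightarrow
\int_\Omega \phi(x)\chi^*_{\Omega_{u,t}}(x)\,d\diver_x\bder(x,t)
\]
for a.e.\ $t$; a careful symmetrization (moving one mollification onto the test object) is what makes this work, since $\int f_\eps\,d\mu_\eps = \int (f_\eps*\rho_\eps)\,d\mu$ reduces it to weak-$*$ convergence against a uniformly convergent sequence of continuous-enough functions. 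Finally, hypothesis (iii) gives an integrable-in-$t$ dominating bound ($|\phi|_\infty\int_H|\diver_x\bder(\cdot,t)|(\Omega)\,dt<\infty$), so dominated convergence in $t$ concludes. I expect the technical heart of the full proof to be exactly this interchange of the $x$-mollification with the $BV$-precise-representative pairing, and the verification that the exceptional null set of $t$'s can be chosen independently of $\eps$.
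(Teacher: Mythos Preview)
This theorem is not proved in the present paper; it is recalled from \cite{DCFV2} as background (Section~\ref{s:ureg}), and the mollification-in-$x$ scheme you outline is exactly the method the paper itself invokes, with reference to \cite{DCFV2}, in Step~1 of the proof of Theorem~\ref{chainb3}. Your identification of the main technical point --- passing to the limit in the pairing of $\chi_{\Omega_{u,t}}$ with $\diver_x\bder_\eps(\cdot,t)$ by transferring the mollifier onto the $BV$ characteristic function and using that $|\diver_x\bder(\cdot,t)|\ll\mathcal H^{N-1}$ so convergence to the precise representative $\chi^*_{\Omega_{u,t}}$ suffices --- matches the argument there.
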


\section{Nonautonomous chain rule for $u\in BV$}
\label{s:BV}

\subsection{The scalar case $u\in BV(\R^{N})$}

The case of a scalar function $u\in BV(\R^{N})$  is studied in the papers \cite{dcfv} and \cite{DCFV2}, where it is considered 
again in the case $B = B(x,u)$ of the form \eqref{A}.
In the first paper the authors have established the validity of the chain rule by requiring a $W^{1,1}$ dependence with respect to the variable $x$, 
while in the second one it is assumed only a $BV$ dependence with respect to the variable $x$\,.

\begin{theorem}[$BV$-dependence, see \cite{DCFV2}] 
\label{chainb2}
Let 
$b:\Omega\times\bbbr\rightarrow\bbbr$ be a locally
bounded Borel function. Assume that
\begin{enumerate}
\item[(i)]  for $\LLU$-a.e.\ $t\in
\bbbr$ the function $b(\cdot,t)  \in BV(\Omega)$;
\item[(ii)] for any compact set $H\subset\bbbr$,
\[
\int_H|D_xb(\cdot,t)|(\Omega)\, dt < +\infty\,.
\]
\end{enumerate} 
Then, for every $u\in BV(\Omega)\cap L_{\rm loc}^{\infty}(\Omega)$,
the function $v:\Omega\rightarrow\bbbr$,
defined by
\[
v(x):=\int_{0}^{u(x)}\!b(x,t)\,dt\,,
\]
belongs to $BV_{\rm loc}(\Omega)$ and for any
$\phi\in C_0^1(\Omega)$ we have
\begin{equation}\label{chainrule}
\begin{split}
\int_{\Omega}\!\nabla\phi(x)  v(x)\,dx = {} &
- \int_{-\infty}^{+\infty}\!\!dt\!\!\int_{\Omega}{\rm sgn}(t)
\chi^*_{\Omega_{u,t}}(x)\phi(x)\,dD_xb(x,t)\\
& -
\int_{\Omega}\!\phi(x)b^*(x,\widetilde
u(x))\nabla u(x)\,dx
-\int_{\Omega}\phi(x)b^*(x,\widetilde
u(x))\,dD^cu(x)
 \\
& -
\!\int_{J_u}\!\!\phi(x)\nu_u(x)\,d\mathcal{H}^{N-1}(x)\!\!
\int_{u^-(x)}^{u^+(x)}\!\!b^*(x,t)\,dt,
\end{split}
\end{equation}
where $\Omega_{u,t}$ is the set defined in \eqref{f:out},
$J_u$ is the jump set of $u$, 
and $\chi^*_{\Omega_{u,t}}$ and
$b^*(\cdot,t)$ are, respectively, the precise representatives of
$\chi_{\Omega_{u,t}}$ and $b(\cdot,t)$.
\end{theorem}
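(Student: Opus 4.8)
The plan is to reduce the $BV$-in-$x$ case to the already-established $W^{1,1}$-in-$x$ case (Theorem~\ref{chainb3reg}, or rather its scalar specialization) by a mollification argument in the $x$ variable, following the strategy of \cite{DCFV2}. First I would fix a standard family of mollifiers $\rho_\eps$ on $\R^N$ and set $b_\eps(x,t) := (b(\cdot,t)*\rho_\eps)(x)$. For a.e.\ $t$, since $b(\cdot,t)\in BV(\Omega)$ we have $b_\eps(\cdot,t)\in C^\infty$ with $\nabla_x b_\eps(\cdot,t) = (D_x b(\cdot,t))*\rho_\eps \to D_x b(\cdot,t)$ weakly-$*$ as measures, and $\|D_x b_\eps(\cdot,t)\|_{L^1} \le |D_x b(\cdot,t)|(\Omega)$; hypothesis (ii) then gives a uniform-in-$\eps$ integrable majorant in $t$ for these total variations. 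Next I would verify that $b_\eps$ satisfies the hypotheses of the $W^{1,1}$-version of the chain rule with $u$ replaced by a mollified $u_\delta$ (or directly work with $u\in BV$ via the known extension of Theorem~\ref{chainb3reg} to $BV$ data — whichever the paper has at hand), obtaining the identity
\begin{equation*}
\int_\Omega \nabla\phi\, v_\eps\,dx = -\int_{-\infty}^{+\infty} dt\int_\Omega \mathrm{sgn}(t)\,\chi^*_{\Omega_{u,t}}\,\phi\, dD_x b_\eps(x,t) - (\text{terms involving } b_\eps^* \text{ against } \nabla u,\ D^c u,\ \text{and the jump part}),
\end{equation*}
where $v_\eps(x) := \int_0^{u(x)} b_\eps(x,t)\,dt$.

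The core of the argument is then to pass to the limit $\eps\to 0$ in each term. The left-hand side is easy: $v_\eps \to v$ in $L^1_{\rm loc}$ by dominated convergence (using local boundedness of $b$ and $L^\infty_{\rm loc}$ bound on $u$), which also gives $v\in BV_{\rm loc}$ once the right-hand side is shown to be bounded by $C\|\phi\|_\infty$. For the first term on the right I would use that $b_\eps(\cdot,t)\to b(\cdot,t)$ pointwise $\mathcal L^N$-a.e.\ (at Lebesgue points) but more importantly that, for fixed $\phi$ and for a.e.\ $t$, $\int_\Omega \psi\, dD_x b_\eps(\cdot,t) \to \int_\Omega \psi\, dD_x b(\cdot,t)$ for continuous $\psi$; the subtlety is that the integrand $\mathrm{sgn}(t)\chi^*_{\Omega_{u,t}}\phi$ is not continuous in $x$, so one must regularize $\chi^*_{\Omega_{u,t}}$ as well, or invoke the fact (available from the structure theory) that $\chi_{\Omega_{u,t}}\in BV$ for a.e.\ $t$ with its jump set controlled by $J_u$, so that the pairing with the diffuse and jump parts of $D_x b$ makes sense and is stable under mollification. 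Then the $t$-integral is handled by the dominated convergence theorem using the majorant from hypothesis (ii).

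The main obstacle I expect is precisely the interaction, in the limit, between the singular (Cantor and jump) parts of $D_x b(\cdot,t)$ and the non-smoothness of $\chi^*_{\Omega_{u,t}}$ and of $u$ itself — i.e.\ justifying that there is no concentration effect when two $BV$ functions (one in $x$, the parametrized field $b(\cdot,t)$; one the composite structure coming from $u$) are paired through mollification. Concretely, one must show the mollified traces $b_\eps^*(x,\widetilde u(x))$ converge to $b^*(x,\widetilde u(x))$ in the appropriate sense against $\nabla u\,\mathcal L^N$, against $D^c u$, and that the jump term $\int_{u^-}^{u^+} b_\eps^*(x,t)\,dt \to \int_{u^-}^{u^+} b^*(x,t)\,dt$ for $\mathcal H^{N-1}$-a.e.\ $x\in J_u$; this requires a careful Fubini-type argument identifying, for a.e.\ $t$, the good representative of $b(\cdot,t)$ and then integrating in $t$, controlling everything by the majorant in~(ii). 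Once these convergences are in place, assembling the four limit terms yields~\eqref{chainrule}, and the bound on the right-hand side by $C(\phi)$ gives $v\in BV_{\rm loc}(\Omega)$.
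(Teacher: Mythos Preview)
The paper does not give its own proof of Theorem~\ref{chainb2}: this result is quoted from \cite{DCFV2} as part of the review section (see the introduction, where Sections~\ref{s:ureg} and~\ref{s:BV} are described as ``review[ing] some known chain rule formulas''). That said, your mollification-in-$x$ strategy --- regularize $b(\cdot,t)$ by $b_\eps := \rho_\eps * b(\cdot,t)$, apply the chain rule for the smoothed field against $u\in BV$, then pass to the limit $\eps\to 0$ term by term using the uniform bound from (ii) --- is exactly the method of \cite{DCFV2}, and it is also the technique the present paper invokes in Step~1 of the proof of its main Theorem~\ref{chainb3} (``it is enough to use a regularization argument as in \cite{DCFV2}''). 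So your plan is correct and aligned with the intended approach; the delicate points you flag (convergence of $b_\eps^*$ to $b^*$ against $D^c u$ and on $J_u$, and the pairing of $\chi^*_{\Omega_{u,t}}$ with $D_x b$) are precisely the ones handled in \cite{DCFV2} via coarea-type slicing and the precise-representative machinery.
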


Notice that if $b(x,t)\equiv b(t)$, then (\ref{chainrule}) reduces
to the well known chain rule formula for the composition of $BV$
functions with a Lipschitz function, while, in the special case that
$b(x,t)\equiv b(x)$, (\ref{chainrule}) gives the
formula for the derivative of the product of two $BV$ functions.

\subsection{The vectorial case $\uu\in BV_{\rm loc}(\R^{N};\R^h)$}

More recently, a very general formula has been proven in \cite{ACDD} (see also \cite{CD} for $N=1$) for vector functions $\uu\in BV(\R^{N},\R^{h})$.

Here the function $B\colon\R^{N}\times\R^h\to\R$ 
is required to satisfy the following assumptions:
\begin{enumerate}
\item[(a)] $x\mapsto B(x,\z)$ belongs to $BV_{\rm loc}(\R^{N})$ for all
$\z\in\R^h$;
\item[(b)] $\z\mapsto B(x,\z)$ is continuously differentiable in $\R^h$ for almost every $x\in\R^{N}$.
\end{enumerate}
We will use
the notation $C_B$ to denote a Lebesgue negligible set of points
such that $B(x,\cdot)$ is $C^1$ for all $x\in\R^{N}\setminus C_B$.

We assume  that $B$ satisfies, besides (a) and
(b), the following \emph{structural assumptions}:

\begin{enumerate}
\item[(H1)]For some constant $M$, $|\nabla_\z B(x,\z)|\le M$ for all
$x\in\R^{N}\setminus C_B$ and $\z\in\R^h$.
\item[(H2)]For any compact set $H\subset\R^h$ there exists a modulus of continuity
$\tilde\omega_H$ independent of $x$ such that
\[
 |\nabla_\z B(x,\z)-\nabla_\z B(x,\z')|\leq \tilde \omega_H(|\z-\z'|)
 \]
for all $\z,\,\z'\in H$ and $x\in\R^{N}\setminus C_B$.
\item[(H3)]For any compact set $H\subset\R^h$ there exist a positive Radon measure
$\lambda_H$ and a modulus of continuity $\omega_H$ such that
\[
|\widetilde{D}_x B(\cdot,\z)(A)-\widetilde{D}_x B(\cdot,\z')(A)|\le
\omega_H(|\z-\z'|)\lambda_H(A)
\]
for all $\z,\,\z'\in H$ and $A\subset\R^{N}$ Borel.
\item[(H4)] The measure
\begin{equation}\label{defsigma}
\sigma\defeq \bigvee_{\z\in\R^h} |D_x B(\cdot,\z)|,
\end{equation}
(where $\bigvee$ denotes the least upper bound in the space of
nonnegative Borel measures) is finite on compact sets, i.e.\ it is a
Radon measure.
\end{enumerate}

We can now canonically build a countably $\Haus{N-1}$-rectifiable
set $\mathcal N$ containing all jump sets of $B(\cdot,\z)$ as follows.
Indeed, we define
\begin{equation}\label{bsigma}
\mathcal N=\bigl\{x: \limsup_{r\downarrow 0}
\frac{\sigma(B_r(x))}{\omega_{n-1}r^{n-1}}
>0\bigr\}.
\end{equation}
It can be checked that $\mathcal N$ is $\sigma$-finite with
respect to $\Haus{N-1}$ and it is countably $\Haus{N-1}$-rectifiable
(see \cite{ACDD}, Section~2, for details).

\begin{theorem}\label{chain ruleqwqw}
Let $B$ be satisfying (a), (b), (H1)-(H2)-(H3)-(H4) above.
Then 
for any function $\uu\in BV_{\rm loc}(\R^{N};\R^h)$,
the function $\v(x) := B(x,\uu(x))$ belongs to $
BV_{\rm loc}(\R^{N})$ and the following chain rule holds:
\begin{itemize}
\item[(i)] (diffuse part) $|D\v|\ll\sigma+|D\u|$ and, for any Radon measure $\mu$
such that $\sigma+|D\u|\ll\mu$, it holds
\begin{equation}\label{diffuse}
\frac{d\widetilde{D}\v}{d\mu} =\frac{d\widetilde{D}_x B(\cdot,\tilde
\uu(x))}{d\mu} +\nabla_\z\tilde B(x,\tilde \uu(x))\frac{d\widetilde{D}
\uu}{d\mu}\qquad\text{$\mu$-a.e. in $\R^{N}$.}
\end{equation}
\item[(ii)] (jump part)
$J_\v\subset \mathcal N\cup J_\uu$
and, denoting by $u^\pm(x)$ and $B^\pm(x,\z)$ the one-sided traces of
$u$ and $B(\cdot,\z)$ induced by a suitable orientation of $\mathcal
N\cup J_\uu$, it holds
\begin{equation}\label{jump}
D^j \v=\big(B^+(x,\uu^+(x))-B^-(x,\uu^-(x)\big)\nu_{\mathcal N\cup
J_\uu}\Haus{N-1}\res \mathcal (\mathcal N\cup J_\uu)
\end{equation}
in the sense of measures.
\end{itemize}
Moreover for a.e.\ $x$ the map $y\mapsto B(y,\uu(x))$ is
approximately differentiable at $x$ and
\begin{equation}\label{eq:appdif}
\nabla \v(x)=\nabla_xB(x,\uu(x))+\nabla_\mathbf{z}B(x, \uu(x))\nabla \uu(x) \qquad
\text{$\Leb{N}$-a.e.\ in $\R^{N}$\,.}
\end{equation}
\end{theorem}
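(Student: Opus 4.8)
The plan is to reduce, by mollification in the $x$ variable, to the classical vectorial chain rule for $C^1$ functions recalled in \eqref{chainDv2}, and then to pass to the limit; since all the assertions are local, we fix a ball and work there. Replace $B$ by the good representative $\tilde B$, for which $\z\mapsto\nabla_\z\tilde B(x,\z)$ is continuous at every $x$, let $(\rho_\eps)$ be a family of mollifiers on $\R^N$, and set $B_\eps(x,\z):=(\tilde B(\cdot,\z)*\rho_\eps)(x)$. By (a), (H1) and (H2), $B_\eps$ is of class $C^\infty$ in $x$, of class $C^1$ in $\z$, hence jointly $C^1$; moreover $\nabla_\z B_\eps=(\nabla_\z\tilde B(\cdot,\z))*\rho_\eps$ inherits the bound $M$ and the modulus $\tilde\omega_H$ from (H1)--(H2), while $\nabla_x B_\eps(\cdot,\z)=(D_x B(\cdot,\z))*\rho_\eps$. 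Applying \eqref{chainDv2} to $B_\eps$ composed with the $BV_{\rm loc}$ map $y\mapsto(y,\uu(y))$ valued in $\R^N\times\R^h$ (whose first $N$ components are smooth, so contribute no Cantor or jump part) yields, for $\v_\eps(x):=B_\eps(x,\uu(x))$, the exact identity
\begin{equation*}
\begin{split}
D\v_\eps = {}& \bigl(\nabla_x B_\eps(x,\tilde\uu(x)) + \nabla_\z B_\eps(x,\uu(x))\,\nabla\uu(x)\bigr)\,\Leb{N} + \nabla_\z B_\eps(x,\tilde\uu(x))\,D^c\uu \\
& + \bigl(B_\eps(x,\uu^+(x)) - B_\eps(x,\uu^-(x))\bigr)\,\nu_\uu\,\hh\res J_\uu,
\end{split}
\end{equation*}
where in the first line $\uu(x)$ has been replaced by the precise representative $\tilde\uu(x)$, which is licit since $J_\uu$ is $\Leb{N}$-negligible.

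The right-hand side above is bounded in total variation, locally uniformly in $\eps$, by (H1) and (H4) together with $|D\uu|$; since $\v_\eps\to\v$ in $L^1_{\rm loc}$ by (a) and (H1), it follows that $\v\in BV_{\rm loc}$ and $D\v_\eps\weakto D\v$, so it remains to identify the weak-$*$ limit of the right-hand side. The terms carrying $\nabla_\z B_\eps$ are straightforward: by (H1)--(H2) one has $\nabla_\z B_\eps(x,\tilde\uu(x))\to\nabla_\z\tilde B(x,\tilde\uu(x))$ pointwise and boundedly, which handles the $\Leb{N}$-- and $D^c\uu$--terms. For the jump term, $B(\cdot,\z)\in BV_{\rm loc}$ possesses one-sided traces $B^\pm(\cdot,\z)$ on the countably $\Haus{N-1}$-rectifiable set $\mathcal N\cup J_\uu$; since $|B(y,\z)-B(y,\z')|\le M|\z-\z'|$ these traces depend Lipschitz-continuously on $\z$, hence may be chosen jointly measurable with a modulus of continuity in $\z$ uniform along the set, and $B_\eps(x,\uu^\pm(x))\to B^\pm(x,\uu^\pm(x))$ there. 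The crucial term is $\nabla_x B_\eps(x,\tilde\uu(x))\,\Leb{N}=\bigl((D_x B(\cdot,\tilde\uu(x)))*\rho_\eps\bigr)\Leb{N}$: splitting $D_x B(\cdot,\z)$ into its diffuse part $\widetilde D_x B(\cdot,\z)$ and its jump part carried by $\mathcal N$, and using the definition \eqref{bsigma} of $\mathcal N$ through $\sigma$ together with the equi-continuity estimate (H3) and the finiteness (H4), one shows that this term converges to $\widetilde D_x B(\cdot,\tilde\uu(\cdot))$ plus $\bigl(B^+(x,\tilde\uu(x))-B^-(x,\tilde\uu(x))\bigr)\nu_{\mathcal N}\,\hh\res\mathcal N$. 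Adding this to the limit of the jump term of the displayed identity, and observing that on $\mathcal N\setminus J_\uu$ one has $\uu^+=\uu^-=\tilde\uu$ while on $J_\uu\setminus\mathcal N$ one has $B^+=B^-$, one obtains exactly the measure \eqref{jump} together with the diffuse part $\widetilde D_x B(\cdot,\tilde\uu)+\nabla_\z\tilde B(x,\tilde\uu)\,\widetilde D\uu$ of $D\v$.

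From the formula just derived the remaining assertions follow by matching pieces. The $\hh\res(\mathcal N\cup J_\uu)$--component gives $J_\v\subset\mathcal N\cup J_\uu$ and the jump formula \eqref{jump}; the diffuse part $\widetilde D\v=\widetilde D_x B(\cdot,\tilde\uu)+\nabla_\z\tilde B(x,\tilde\uu)\,\widetilde D\uu$, each summand being absolutely continuous with respect to $\sigma$ (by (H3)--(H4)) and to $|D\uu|$ respectively, yields $|D\v|\ll\sigma+|D\uu|$ and, via the Radon--Nikodym theorem against any $\mu$ with $\sigma+|D\uu|\ll\mu$, formula \eqref{diffuse}; the absolutely continuous part gives \eqref{eq:appdif} at $\Leb{N}$-a.e.\ $x$, the approximate differentiability of $y\mapsto B(y,\uu(x))$ being inherited from that of $B(\cdot,\z)$ jointly in $(x,\z)$. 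The main obstacle is precisely the limit of the $\nabla_x B_\eps(x,\tilde\uu(x))$ term: one must show that mollifying the $\z$-dependent measure $D_x B(\cdot,\z)$ and then evaluating along the $BV$ section $\z=\tilde\uu(x)$ reproduces in the limit the correct splitting into the diffuse measure $\widetilde D_x B(\cdot,\tilde\uu)$ and a jump measure carried by $\mathcal N$ with traces $B^\pm(x,\tilde\uu)$, which is exactly where the structural hypotheses (H3)--(H4)---and the $\z$-uniform measurability and trace estimates for $B^\pm$---enter in an essential way.
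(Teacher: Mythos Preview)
The paper does not give a proof of this theorem: it is stated in Section~\ref{s:BV} as a known result quoted from \cite{ACDD}, in a section whose explicit purpose (see the Introduction) is to \emph{review} existing chain rule formulas before the new contribution in Section~\ref{s:div}. There is therefore no ``paper's own proof'' of Theorem~\ref{chain ruleqwqw} to compare your attempt against.

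That said, your strategy---mollify $B$ in the $x$ variable, apply the autonomous vectorial chain rule \eqref{chainDv2} to $B_\eps$ via the embedding $y\mapsto(y,\uu(y))$, and pass to the limit---is precisely the regularization scheme used both in \cite{ACDD} for this very theorem and in the present paper for its main result Theorem~\ref{chainb3} (see Step~1 of that proof). Your outline is therefore on the right track, and you have correctly isolated the genuine difficulty: the limit of the term $\nabla_x B_\eps(x,\tilde\uu(x))\,\Leb{N}$, where one convolves a $\z$-dependent \emph{measure} and then evaluates along the $BV$ section $\z=\tilde\uu(x)$. Showing that this converges to $\widetilde D_x B(\cdot,\tilde\uu(\cdot))$ plus the correct jump contribution on $\mathcal N$ is exactly where (H3)--(H4) enter, and it is the substantial part of the argument in \cite{ACDD}; your sketch acknowledges this but does not carry it out. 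The remaining limits (the $\nabla_\z B_\eps$ terms and the $J_\uu$-jump term) are handled essentially as you indicate.
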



Here the expression
\[
\frac{d\widetilde{D}_x B(\cdot,\tilde \uu(x))}{d\mu}
\]
means the pointwise density of the measure $\widetilde{D}_x B(\cdot,\z)$
with respect to $\mu$, computed choosing $\z=\tilde \uu(x)$ (notice
that the composition is Borel measurable thanks to the
Scorza-Dragoni Theorem and Lemma 3.9 in \cite{ACDD}). Analogously,
the expression $\tilde B(x,\z)$ is well defined at points $x$ such
that $x\notin S_{B(\cdot,\z)}$ and it can be proved that
$\nabla_\z\tilde B(x,\z)$ is well defined for all $\z$ out of a
countably $\Haus{N-1}$-rectifiable set of points $x$.

%
\section{A generalization to divergence--measure fields}
\label{s:div}

This section is devoted to the proof of the chain rule formula
for divergence--measure vector fields
(see Theorem~\ref{chainb3}).
More precisely, we shall consider the composition
$\v(x) := \B(x, u(x))$ where
$u$ is a function of bounded variation,
$\B(x, \cdot)$ is of class $C^1$
and $\B(\cdot, t)$ is a divergence--measure field.
(See Section~\ref{ss:assumptions} for the complete list of assumptions.)
In order to obtain the formula in this general setting,
we need to assume \textsl{a-priori} the existence of strong traces of $\B$
on a countable
$\mathcal{H}^{N-1}$--rectifiable universal jump set $\mathcal{N}$.
We remark that this assumption is satisfied in the BV setting
recalled in Section~\ref{s:BV}.

Before stating our result,
in Section~\ref{ss:div}
we recall some basic facts on divergence--measure fields.
Then, after listing all the assumptions in Section~\ref{ss:assumptions},
we prove some preliminary results in Section~\ref{ss:prel}.
Finally, in Section~\ref{ss:div2} we prove our main
uniqueness result.

\subsection{Divergence--measure fields }
\label{ss:div}

In what follows,
since the problem is local we shall assume that $\Omega=\R^{N}$.
Moreover, we shall denote by $\DM$ the space of all
vector fields $\A\in L^\infty_{\rm loc}(\R^N;\R^N)$ whose divergence
in the sense of distribution is a Radon measure with locally finite total variation.

For a vector field \(\A\in\DM\)
we shall use the usual decomposition of a measure
\[
\Div \A
=: \Div^a\A\,\LLN+\Div^s\A,
\]
where
$\diver^a\A\,\LLN$ is the absolutely continuous part and
$\diver^s\A$ is the singular part of $\diver\A$ with respect to the Lebesgue measure.
On the other hand, Chen and Frid in \cite{cf} proved that if $\A\in \DM$ 
then $\Div\A\ll\mathcal H^{N-1}$,
hence the singular part can be further decomposed as
\[
\Div^s \A
=: \diver^c\A+\diver^j\A,
\]
where
$\diver^c\A(C) = 0$ if $C$ has $\sigma$--finite
$\mathcal{H}^{N-1}$--measure.
We shall also use the diffuse part $\widetilde{\Div}\A:=\diver^a\A\,\LLN + \diver^c\A$
of the measure $\Div\A$.

In the following, we shall denote by
\(\mathcal{SDM}^{\infty}\) the space of all
vector fields \(\A\in\DM\) such that
\(\Div^c\A = 0\).

Given a domain $\Omega\subset\R^N$ of class $C^1$, 
we can define the 
trace of the normal component of $\A$ on $\partial\Omega$
as a distribution as follows:
\begin{equation}\label{f:ntrace}
\pscal{\text{Tr}(\A, \partial\Omega)}{\varphi}
:= \int_{\Omega} \nabla\varphi \cdot \A\, dx + \int_{\Omega} \varphi\; 
d \Div\A,
\qquad \forall\varphi\in C^\infty_c(\R^N)
\end{equation}
(see \cite{Anz,cf}).

This notion of distributional trace can be extended to any
$\mathcal{H}^{N-1}$--rectifiable set $\mathcal{J} \subset \R^N$.
In particular, it is possible to define the traces
\(\text{Tr}^\pm(\A, \mathcal{J})\) in such a way that
\(\text{Tr}^-(\A, \partial\Omega) = \text{Tr}(\A, \partial\Omega)\)
for every $\Omega \Subset \R^N$ of class $C^1$
(see \cite{ACM}, Definition~3.3).

Unfortunately, these distributional traces are in general too weak
to be used in a chain rule (see the discussion in \cite{ACM} and \cite{ADM}).

For our purposes we need the notion of (strong) traces given below.

\begin{definition}[Traces]\label{def:tracce}
	Let $u\in L^\infty_{\rm loc}(\R^{N})$ and  
	let $\mathcal J\subset \R^{N}$ be a countably $\H^{N-1}$-rectifiable set oriented by a normal vector
	field $\nu$. 
	We say that two Borel functions \(u^{\pm}\colon\mathcal{J}\to\R\)
	are the traces of \(u\) on \(\mathcal{J}\)
	if for \(\H^{N-1}\)-almost every \(x\in\mathcal{J}\)
	it holds
	\[
	\lim_{r\to 0^+} \int_{B_r^\pm(x)} |u(y) - u^\pm(x)|\, dy = 0,
	\]
	where
	$B_r^\pm(x) :=  B_r(x) \cap \{ y\in\R^{N} : \pm \langle y-x,\nu(x)\rangle \geq 0 \}$.
\end{definition}

The following result is a particular case of Lemma~3.1 in \cite{CDDG}.

\begin{lemma}\label{lemma:campo2}
	Let $\mathcal J$ be a countably $\mathcal{H}^{N-1}$--rectifiable set 
	with oriented normal vector $\nu$.
	Let \(\A\in \mathcal{DM}^{\infty}\)
	and assume that $\A$ admits traces $\A^\pm$ on $\mathcal J$
	for \(\mathcal H^{N-1}\) almost every $z\in\mathcal J$.
	Then
	it holds
	\[
	\Div \A\res \mathcal J =  \pscal{\A^+ - \A^-}{\nu}\, \mathcal H^{N-1}\res \mathcal J\,.
	\]  
\end{lemma}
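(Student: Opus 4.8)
The plan is to prove the identity $\Div\A\res\mathcal J = \pscal{\A^+-\A^-}{\nu}\,\mathcal H^{N-1}\res\mathcal J$ by a blow-up argument, reducing the statement to the case of a hyperplane via the countable rectifiability of $\mathcal J$. First I would recall that, since $\A\in\DM$, by the Chen--Frid result $\Div\A\ll\mathcal H^{N-1}$, so the restriction $\Div\A\res\mathcal J$ is a well-defined Radon measure absolutely continuous with respect to $\mathcal H^{N-1}\res\mathcal J$; by the Radon--Nikod\'ym theorem there is a Borel density $\theta\in L^1(\mathcal H^{N-1}\res\mathcal J)$ with $\Div\A\res\mathcal J=\theta\,\mathcal H^{N-1}\res\mathcal J$. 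The goal is then to identify $\theta(x)=\pscal{\A^+(x)-\A^-(x)}{\nu(x)}$ for $\mathcal H^{N-1}$-a.e.\ $x\in\mathcal J$.

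Next I would localize at a fixed point $x_0\in\mathcal J$ which is simultaneously a point of approximate differentiability of $\mathcal J$ (so that $\mathcal J$ has an approximate tangent hyperplane $\nu(x_0)^\perp$ at $x_0$), a Lebesgue point of $\theta$ with respect to $\mathcal H^{N-1}\res\mathcal J$, and a point where the two trace limits of Definition~\ref{def:tracce} hold; by rectifiability and the trace hypothesis, $\mathcal H^{N-1}$-almost every point of $\mathcal J$ has all these properties. Using the distributional definition \eqref{f:ntrace} of the normal trace on the half-balls $B_r^\pm(x_0)$, one has for every $\varphi\in C_c^\infty(\R^N)$
\[
\int_{B_r^+(x_0)}\nabla\varphi\cdot\A\,dx+\int_{B_r^-(x_0)}\nabla\varphi\cdot\A\,dx
=\int_{B_r(x_0)}\nabla\varphi\cdot\A\,dx = -\int_{B_r(x_0)}\varphi\,d\Div\A + \text{(boundary terms on }\partial B_r),
\]
so the flux of $\A$ across the (approximate) diametral disc $\mathcal J\cap B_r(x_0)$, seen as $\text{Tr}^-$ from one side minus $\text{Tr}^+$ from the other, captures precisely $\Div\A\res(\mathcal J\cap B_r(x_0))$. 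Then I would perform the rescaling $y=x_0+r z$, exploit the $L^\infty_{\rm loc}$ bound on $\A$ together with the strong trace property to show that the rescaled fields $\A(x_0+r\cdot)$ converge, in $L^1$ on each half-ball, to the constant vectors $\A^\pm(x_0)$, and pass to the limit $r\to 0^+$ to obtain that the density of $\Div\A\res\mathcal J$ at $x_0$ equals the jump of the normal component $\pscal{\A^+(x_0)-\A^-(x_0)}{\nu(x_0)}$. Since $x_0$ was an arbitrary point in a set of full $\mathcal H^{N-1}\res\mathcal J$ measure, this proves the claim.

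Alternatively, and perhaps more cleanly given the machinery already cited, I would simply invoke Lemma~3.1 of \cite{CDDG} directly: the statement here is explicitly said to be a particular case of that lemma, so the ``proof'' may amount to checking that the hypotheses of that more general result are met — namely that $\A\in\DM$ (true by assumption), that $\mathcal J$ is countably $\mathcal H^{N-1}$-rectifiable with an oriented normal $\nu$ (assumed), and that the strong traces $\A^\pm$ exist $\mathcal H^{N-1}$-a.e.\ on $\mathcal J$ (assumed). Under these hypotheses one quotes the conclusion $\Div\A\res\mathcal J=\pscal{\A^+-\A^-}{\nu}\,\mathcal H^{N-1}\res\mathcal J$ verbatim.

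The main obstacle in the self-contained approach is the passage to the limit under the blow-up: one must control the boundary contributions on $\partial B_r(x_0)$ uniformly in $r$ — the distributional normal trace on a sphere is only defined up to such terms — and justify that, for $\mathcal L^1$-a.e.\ radius $r$ along a suitable sequence, these spherical flux terms do not interfere, while simultaneously using the strong (not merely distributional) trace convergence to replace $\A$ by the constants $\A^\pm(x_0)$ on the two half-balls. This is exactly the point where the strong trace hypothesis of Definition~\ref{def:tracce} is indispensable and where the weaker distributional traces (as remarked in the text, following \cite{ACM,ADM}) would fail. Given that the paper frames the lemma as a direct corollary of \cite{CDDG}, I expect the intended proof to be the short citation route, with the blow-up argument being the content that \cite{CDDG} supplies.
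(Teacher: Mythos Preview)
Your proposal is correct and matches the paper's approach: the paper gives no proof at all for this lemma, simply stating beforehand that it ``is a particular case of Lemma~3.1 in \cite{CDDG}'', which is exactly your second (citation) route. Your blow-up sketch is a reasonable outline of what lies behind that cited result, but it is not needed here.
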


\medskip
\noindent

\subsection{Assumptions on  the vector field \(\bder\).}
\label{ss:assumptions}
Let $\bder:\R^{N}\times\bbbr\rightarrow\bbbr^N$ be a locally
bounded Borel function satisfying the following conditions:
\begin{enumerate}
\item[(i)]  for ${\mathcal L}^N$-a.e.\ $x\in
\R^{N}$ the function $\bder(x,\cdot)$ is continuous in $\bbbr$,
uniformly w.r.t.\ $x$;
\item[(ii)] for $\LLU$-a.e.\ $t\in
\bbbr$ the function $\bder(\cdot,t)  \in \DM$;
\item[(iii)] the measure 
\begin{equation}\label{defsigmabarra}\nonumber
\sigma\defeq \bigvee_{t\in \R} |\Div_x \bder(\cdot,t)|
\end{equation}
is a Radon measure;
\item[(iv)] for any compact set $H\subset\R$ there exist a positive Radon measure
$\lambda_H$ and a modulus of continuity $\omega_H$ such that
\[
|\widetilde\Div_x \bder(\cdot,t)(C)-\widetilde\Div_x \bder(\cdot,w)(C)|\le
\omega_H(|t-w|)\lambda_H(C)
\]
for all $t,w\in H$ and $C\subset\R^{N}$ Borel.
\end{enumerate}

Let us define the singular set 
\begin{equation}
\label{f:N}
{\mathcal N}=\Big\{x\in \R^{N}: \liminf_{r \to 0} \frac{\sigma(B_r(x))}{r^{N-1}}>0\Big\}.
\end{equation}
We assume that
\begin{itemize}
	\item[(v)]
	$\mathcal N$ is a countably $\mathcal{H}^{N-1}$--rectifiable set and
	$\H^{N-1}(\mathcal N\cap K)<+\infty$
	for every compact set $K\subseteq \R^{N}$.
\end{itemize}

In the following, \(\nu\) will always denote an oriented normal vector field on \(\mathcal{N}\).
We remark that, in the setting of Theorem~\ref{chain ruleqwqw},
the rectifiability of the singular set $\mathcal N$
follows from (i)--(iv).

Furthermore we assume that 
\begin{itemize}
\item[(vi)] 
for every $t\in\R$ and for \(\H^{N-1}\) almost every $x\in\R^{N}\setminus \mathcal N$  there exists the limit
\begin{equation}\label{f:vi}
\widetilde \bder(x,t)=\lim_{r\to 0} \mean{B_r(x)} \bder(y,t)\,dy.
\end{equation}
\end{itemize}

Without loss of generality we shall always assume that  
$\bder(x,t)=\widetilde \bder(x,t)$
on points \((x,t)\) where \eqref{f:vi} holds.

By using (vi), as in \cite[Section  3]{ACDD},  
we can prove that there exists a set $\mathcal N_0 $ with \(\H^{N-1}(\mathcal N_0)=0\) such that for every point \(x\in\R^{N}\setminus (\mathcal N\cup \mathcal N_0)\) and every $\w\in \R$  there exists the limit
\[
\widetilde \bder(x,t)=\lim_{r\to 0} \mean{B_r(x)} \bder(y,t)\,dy.
\]

Moreover we consider the following assumption on the traces of the vector field \(\bder\).

\begin{itemize}
\item[(vii)] 
For every \(t\in\R\), 
the function \(\bder(\cdot, t)\) admits (strong) traces
\(\bder^\pm(\cdot,t)\) on \(\mathcal{N}\).
\end{itemize}
In what follows, we shall use the notation 
\(\beta^\pm(x,t) := \pscal{\bder^\pm(x,t)}{\nu(x)}\),
\(t\in\R\), \(x\in\mathcal{N}\).

\begin{remark}\label{anzellotti}
By assumption (ii) it follows that, for every \(t\in\R\),
the vector field \(\bder(\cdot, t)\) admits distributional traces
\(\text{Tr}^\pm(\bder(\cdot, t), \mathcal{N})\) on \(\mathcal{N}\)
in the sense of Anzellotti
(see \cite{Anz,cf}).
Nevertheless, this notion of trace is too weak
in order to obtain the chain rule formula.	
\end{remark}

\medskip
As in \cite[Prop.~3.2(ii)]{ACDD},
it can be proved that there exists
a Borel set \(\mathcal{N}_1\subseteq \mathcal{N}\)
such that, for every \(x\in\mathcal{N}_1\),
the traces \(\bder^\pm(x,t)\) are defined for every \(t\in\R\) and are continuous in \(t\).

By (vii) and Proposition \ref{lemma:campo2} we have
\begin{equation}
\label{mmm}
\frac{d\Div_x^j\bder(\cdot,t)}{d\hh}(x)=\beta^+(x,t)-\beta^-(x,t)
\end{equation}
for every $\w\in \R$ and for $\H^{N-1}$ a.e. $x\in {\mathcal N}$.

If assumptions (i)--(vii) hold, then for every $t\in\R$ the decomposition formula 
\begin{equation}
\label{decomp}
(\Div_x \bder)(\cdot,\w)=(\Div^a_x \bder)(x,\w)\,{\mathcal L}^{N}+\frac{\Div_x^c\bder(\cdot,t)}{d\sigma}(x)\sigma+
\left[\beta^{+}(x,\w)-\beta^{-}(x,\w)\right]\, {\mathcal H}^{N-1} \res{{\mathcal N}}
\end{equation}
holds in the sense of measures and there exists a Borel set ${\mathcal N_2}\subseteq \R^{N}$ with $\sigma({\mathcal N_2})=0$ such that 
the following limit
\[
\lim_{r\downarrow  0}\frac{\widetilde\Div_x\bder(\cdot,t)(B_r(x))}{\sigma(B_r(x))}
=\frac{d\widetilde\Div_x\bder(\cdot,t)}{d\sigma}(x)
\]
exists for every $x\in \R^{N}\setminus {\mathcal N_2}$ and for every $t\in\R$  and this equality holds,
where $\frac{d\Div_x\bder(\cdot,t)}{d\sigma}(x)$ is the Radon-Nikod\'ym derivative at $x$ of the measure $\Div_x\bder(\cdot,t)$ w.r.t.\ $\sigma$.
In particular we have that there exists a Borel set ${\mathcal N_3}\subseteq \R^{N}$ with $\LLN({\mathcal N_3})=0$ such that 
the following limit
$$
\lim_{r\downarrow  0}\frac{\Div^a_x\bder(\cdot,t)(B_r(x))}{\LLN(B_r(x))}=\frac{d\Div^a_x\bder(\cdot,t)}{d\LLN}(x)
$$
exists for every $x\in \R^{N}\setminus {\mathcal N_3}$ and for every $t\in\R$  and this equality holds,
where $\frac{d\Div^a_x\bder(\cdot,t)}{d\sigma}(x)$ is the Radon-Nikod\'ym derivative at $x$ of the measure $\Div^a_x\bder(\cdot,t)$ w.r.t.\ $\LLN$. 
Similarly, there exists a Borel set ${\mathcal N_4}\subseteq \R^{N}$ with $\sigma({\mathcal N_4})=0$ such that 
the following limit
$$
\lim_{r\downarrow  0}\frac{\Div^c_x\bder(\cdot,t)(B_r(x))}{\sigma(B_r(x))}=\frac{d\Div^c_x\bder(\cdot,t)}{d\sigma}(x)
$$
exists for every $x\in \R^{N}\setminus {\mathcal N_4}$ and for every $t\in\R$  and this equality holds,
where $\frac{d\Div^c_x\bder(\cdot,t)}{d\sigma}(x)$ is the Radon-Nikod\'ym derivative at $x$ of the measure $\Div^c_x\bder(\cdot,t)$ w.r.t.\ $\sigma$.

\subsection{Preliminary results}
\label{ss:prel}

Let us define
\begin{equation}\label{defB}
\B(x,t)=\int_0^t \bder(x,w)\,dw\,.
\end{equation}

\begin{proposition}\label{kkkk}
 Let $\bder:\R^{N}\times\R\rightarrow\R^N$ be a Borel function satisfying {\rm (i)--(vii)}.
We have that
\begin{enumerate}
\item[(a)]
for every $t\in\R$ the function $x\mapsto \B(x,t)\,dt$ belongs to $\mathcal{DM}^{\infty}\,;$
\item[(b)] for every $t\in\R$ 
$$
\Div_x\B(x, t)<\!\!<\sigma\,;
$$ 
\item[(c)] the equality 
$$\Div_x\B(x, t)=
\left[\int_{0}^{t}
\frac{d\Div_x\bder}{d\sigma}(x,w)\,dw\right]\,d\sigma
$$
holds in the sense of measures for every $t\in\R$.
\end{enumerate} 
\end{proposition}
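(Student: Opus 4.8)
The plan is to establish the three claims in order, using an approximation of $\bder(x,\cdot)$ by simple (piecewise constant in $t$) vector fields, for which everything is explicit, and then passing to the limit using the uniform continuity in $t$ from assumption (i) together with the uniform control of the measures $\Div_x\bder(\cdot,t)$ by $\sigma$ from (iii) and (iv). First I would fix $t\in\R$ and prove (a): since $\bder$ is locally bounded, $\B(\cdot,t)=\int_0^t\bder(\cdot,w)\,dw$ is locally bounded, and for $\varphi\in C^\infty_c(\R^N)$ one computes, using Fubini (justified by local boundedness of $\bder$ and (iii)), that
\[
\int_{\R^N}\pscal{\nabla\varphi(x)}{\B(x,t)}\,dx
=\int_0^t\!\!\left(\int_{\R^N}\pscal{\nabla\varphi(x)}{\bder(x,w)}\,dx\right)dw
=-\int_0^t\!\!\left(\int_{\R^N}\varphi\,d\Div_x\bder(\cdot,w)\right)dw,
\]
which, again by (iii) and Fubini for the signed measures, is a bounded linear functional of $\varphi$ with respect to the sup norm; hence $\Div_x\B(\cdot,t)$ is a Radon measure with $|\Div_x\B(\cdot,t)|\le\int_0^{|t|}|\Div_x\bder(\cdot,w)|\,dw$ (as measures, in the appropriate sense, interpreting $\int_0^t$ for $t<0$ as $-\int_t^0$), so $\B(\cdot,t)\in\DM$. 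In particular this last inequality, together with the definition $\sigma=\bigvee_{w}|\Div_x\bder(\cdot,w)|$, gives $|\Div_x\bder(\cdot,w)|\le\sigma$ for a.e.\ $w$, hence $|\Div_x\B(\cdot,t)|\le |t|\,\sigma$ (locally), proving (b).

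For (c), the natural route is: by (b) the Radon--Nikod\'ym derivative $\frac{d\Div_x\B(\cdot,t)}{d\sigma}$ exists $\sigma$-a.e., and I want to identify it with $\int_0^t\frac{d\Div_x\bder}{d\sigma}(x,w)\,dw$. I would first check that the integrand is well defined and jointly measurable: for each $w$ outside an $\LLU$-null set, $\frac{d\Div_x\bder(\cdot,w)}{d\sigma}(x)$ exists for every $x$ outside the set $\mathcal N_2$ (with $\sigma(\mathcal N_2)=0$) described in the paragraph preceding this subsection, and assumption (iv) forces $w\mapsto \frac{d\widetilde\Div_x\bder(\cdot,w)}{d\sigma}(x)$ to have modulus of continuity (locally) $\omega_H$, uniformly in $x$, so the $w$-integral makes sense and defines a $\sigma$-measurable function of $x$. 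Then, for $\varphi\in C^\infty_c(\R^N)$, apply the displayed identity above, rewrite $\int_{\R^N}\varphi\,d\Div_x\bder(\cdot,w)=\int_{\R^N}\varphi(x)\,\frac{d\Div_x\bder}{d\sigma}(x,w)\,d\sigma(x)$ (valid since $\Div_x\bder(\cdot,w)\ll\sigma$), and interchange the $w$- and $x$-integrations (Fubini again, using (iii) for the absolute integrability) to obtain
\[
\int_{\R^N}\varphi\,d\Div_x\B(\cdot,t)
=\int_{\R^N}\varphi(x)\left(\int_0^t\frac{d\Div_x\bder}{d\sigma}(x,w)\,dw\right)d\sigma(x),
\]
which is exactly claim (c).

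The main obstacle I expect is the measurability and Fubini bookkeeping in (c): one must be careful that the exceptional $\sigma$-null set on which $\frac{d\Div_x\bder(\cdot,w)}{d\sigma}(x)$ fails to exist can a priori depend on $w$, and that $|\Div_x\bder(\cdot,w)|\le\sigma$ only for a.e.\ $w$. The clean way around this is precisely the structure already set up before the subsection: assumption (iv) lets one replace $w\mapsto\frac{d\widetilde\Div_x\bder(\cdot,w)}{d\sigma}(x)$ by its continuous-in-$w$ representative simultaneously for all $x\notin\mathcal N_2$, and the decomposition \eqref{decomp} isolates the jump part (which lives on $\mathcal N$, $\sigma$-null-complemented appropriately via \eqref{mmm} and the continuity in $t$ of $\beta^\pm$ on $\mathcal N_1$) from the diffuse part; combining these gives joint Borel measurability of $(x,w)\mapsto\frac{d\Div_x\bder}{d\sigma}(x,w)$ on a full-measure set, after which Fubini applies without further difficulty. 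Everything else — local boundedness estimates, the sup-norm bound giving the measure property, the Radon--Nikod\'ym identification — is routine once this measurability point is settled.
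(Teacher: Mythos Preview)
Your proposal is correct and follows essentially the same approach as the paper: compute $\int_{\R^N}\pscal{\nabla\varphi}{\B(\cdot,t)}\,dx$ by Fubini, use the definition of $\Div_x\bder(\cdot,w)$, then write this measure as $\frac{d\Div_x\bder}{d\sigma}\,d\sigma$ (using that $|\Div_x\bder(\cdot,w)|\le\sigma$, i.e.\ the density is bounded by $1$) and apply Fubini once more. The paper's proof is just this one chain of equalities, without the simple-function approximation you mention at the outset (which you do not actually use either), and without making the measurability and exceptional-set bookkeeping explicit; your discussion of these points adds rigor but does not change the underlying argument.
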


\begin{proof} Since
$$
\left|\frac{d \Div_x\bder}{d \sigma}(x,w)
\right|\leq 1\,,
$$
for every test function $\phi\in C^1_0(\R^{N})$, for 
 $\H^{N-1}$ a.e. $x\in\R^{N}$
and for every $t\in\R$, we have
\begin{equation}\nonumber%
\begin{split}
& \int_{\R^{N}}\nabla\phi(x)\B(x, t)\,dx=
\int_{0}^{t}\left[\int_{\R^{N}}\nabla\phi(x)
\bder(x,w)\,dx\right]dw
\\
=&-\int_{0}^{t}\left[\int_{\R^{N}}\phi(x)
d \Div_x\bder(\cdot,w)\right]dw
=-\int_{0}^{t}\left[\int_{\R^{N}}\phi(x)
\frac{d \Div_x\bder}{d \sigma}(x,w)\,d \sigma\right]dw
\\
=&-\int_{\R^{N}}\phi(x)\left[\int_{0}^{t}
\frac{d \Div_x\bder}{d \sigma}(x,w)\,dw\right]d \sigma.
\qedhere
\end{split}
\end{equation}
\end{proof}

\begin{corollary} \label{bbbb} Under the previous assumptions, we have
\[
\Div^a_x\B(x, t) =
\int_{0}^{t}
\frac{d\Div^a_x\bder}{d\LLN}(x,w)\,dw
\quad\forall x\in\R^N\setminus\mathcal{N}_3,
\]
and 
$$\Div^c_x\B(x, t)=
\left[\int_{0}^{t}
\frac{d\Div^c_x\bder}{d\sigma}(x,w)\,dw\right]\,d\sigma
$$
in the sense of measures.
\end{corollary}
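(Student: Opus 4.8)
The plan is to obtain both identities directly from Proposition~\ref{kkkk}(c) by splitting the single, fixed reference measure $\sigma$ and exploiting that this splitting is independent of the integration variable $w$.

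First I would write $\rho(x,w):=\frac{d\Div_x\bder}{d\sigma}(x,w)$, a Borel function with $|\rho|\le1$ which, by the material collected at the end of Section~\ref{ss:assumptions}, is defined for every $w\in\R$ and $\sigma$-a.e.\ $x$; setting $F(x,t):=\int_0^t\rho(x,w)\,dw$, Proposition~\ref{kkkk}(c) reads $\Div_x\B(\cdot,t)=F(\cdot,t)\,\sigma$ in the sense of measures. Next I would decompose
\[
\sigma=\theta\,\LLN+\sigma^c+\sigma^j,\qquad \theta:=\frac{d\sigma}{d\LLN},
\]
where $\sigma^c$ (the Cantor part) vanishes on every set of $\sigma$-finite $\mathcal H^{N-1}$ measure and $\sigma^j=\sigma\res\mathcal N$ is concentrated on $\mathcal N$, which by~(v) has locally finite $\mathcal H^{N-1}$ measure; I would fix a Borel set $S_c$ carrying $\sigma^c$ on which $\theta=0$ and $\sigma^j=0$. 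Since multiplication by the bounded Borel function $F(\cdot,t)$ maps each of these three mutually singular pieces into itself, uniqueness of the absolutely continuous/Cantor/jump decomposition yields at once $\Div^a_x\B(x,t)=F(x,t)\,\theta(x)$ and $\Div^c_x\B(\cdot,t)=F(\cdot,t)\,\sigma^c$.

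Then I would run the identical computation for $\bder$: from $\Div_x\bder(\cdot,w)=\rho(\cdot,w)\,\sigma$ one reads off
\[
\frac{d\Div^a_x\bder}{d\LLN}(x,w)=\rho(x,w)\,\theta(x)\ \ (x\notin\mathcal N_3),\qquad
\frac{d\Div^c_x\bder}{d\sigma}(x,w)=\rho(x,w)\,\mathbf 1_{S_c}(x)\ \ (x\notin\mathcal N_4),
\]
with $\mathcal N_3,\mathcal N_4$ the fixed null sets of Section~\ref{ss:assumptions}. Because $\theta$ and $\mathbf 1_{S_c}$ do not depend on $w$, integrating over $w\in[0,t]$ (Fubini applies, the map $(x,w)\mapsto\rho(x,w)$ being jointly Borel, as in \cite{ACDD} via Scorza--Dragoni) gives $\int_0^t\frac{d\Div^a_x\bder}{d\LLN}(x,w)\,dw=\theta(x)F(x,t)=\Div^a_x\B(x,t)$ and $\big[\int_0^t\frac{d\Div^c_x\bder}{d\sigma}(x,w)\,dw\big]\sigma=F(\cdot,t)\,\sigma^c=\Div^c_x\B(\cdot,t)$, which are precisely the two asserted formulas.

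The one step that needs genuine care — and the main obstacle — is justifying the interchange of the $w$-integration with the operation ``extract the absolutely continuous (resp.\ Cantor) part of a measure''. This is legitimate exactly because each $\Div_x\bder(\cdot,w)$ is absolutely continuous with respect to one and the same measure $\sigma$: the Borel sets $\{\theta>0\}$, $S_c$ and $\mathcal N$ which carry the three parts are then common to all $w$, so multiplying by $F(\cdot,t)$ and integrating in $w$ can never mix them. Granting this, the remainder is routine bookkeeping with the Radon--Nikodym derivatives already set up in Section~\ref{ss:assumptions}.
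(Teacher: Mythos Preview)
Your argument is correct and is precisely the approach the paper has in mind: the corollary is stated without proof immediately after Proposition~\ref{kkkk}, and the intended derivation is exactly to split the single reference measure $\sigma$ into its absolutely continuous, Cantor and jump parts and use that this splitting is independent of the parameter $w$. Your identification of the one delicate point---that the interchange of $w$-integration with ``take the a.c./Cantor part'' is legitimate only because all the measures $\Div_x\bder(\cdot,w)$ share the common dominating measure $\sigma$---is spot on.
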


\begin{corollary}\label{lololo}
Let $\bder$ be a Borel function satisfying {\rm (i)--(vii)}
and let \(\B\) be the vector field defined in \eqref{defB}.
Then it holds:
\begin{enumerate}
\item[(a)]
For every \(x\in{\mathcal N}\setminus {\mathcal N_1}\)
and for every \(w\in\R\)
one has
\[
\lim_{r\downarrow 0}
\mean{B^\pm_r(x)}
|\B(y,w) - \B^\pm(x,w)|\, dy = 0, 
\]
where
\[
\B^\pm(x,w) := \int_0^w \bder^{\pm}(x,t)\, dt.
\]

\item[(b)] 
For every $x\in\R^{N}\setminus({\mathcal N}\cup\mathcal N_1)$
and every \(w\in\R\) one has
\[
\lim_{r\downarrow 0} \mean{B_r(x)}\left|\B(y,w)-\widetilde{\B}(x,w)\right|\,dy=0,
\]
where
\[
\widetilde{\B}(x,w) := \int_0^w \bder(x,t)\, dt.
\]

\item[(c)] 
The equality 
\[
\begin{split}
\Div_x \B(x,t)
= {}&\left[\int_{0}^{w}
\nabla_x\bder(x,t)\,dt\right]
\,d\LLN
+\left[\int_{0}^{w}
\frac{dD_x^c\bder}{d\sigma}(x,t)\,dt\right]
\,d\sigma
\\
& +
\pscal{\B^+(x,t)-\B^-(x,t)}{\nu(x)}
\,d\hh\res{{{\mathcal N}}}
\end{split}
\]
holds in the sense of measures.
\end{enumerate} 
\end{corollary}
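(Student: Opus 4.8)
The plan is to derive Corollary~\ref{lololo} directly from Proposition~\ref{kkkk} and Corollary~\ref{bbbb}, together with the decomposition \eqref{decomp} of $\Div_x\bder(\cdot,w)$ available under assumptions (i)--(vii). The three assertions are really three facets of one fact: the map $t\mapsto \B(\cdot,t)=\int_0^t\bder(\cdot,w)\,dw$ commutes, in a suitable sense, with taking $\Div_x$, and the integration against $dw$ can be pushed through each term of the measure decomposition of $\Div_x\bder$.

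For part (b), I would argue as follows. For $x\in\R^N\setminus(\mathcal N\cup\mathcal N_1)$ (and discarding the $\H^{N-1}$-null set $\mathcal N_0$ from assumption (vi)), the limit $\widetilde\bder(x,w)=\lim_{r\downarrow0}\mean{B_r(x)}\bder(y,w)\,dy$ exists for every $w\in\R$. Write
\[
\mean{B_r(x)}\bigl|\B(y,w)-\widetilde\B(x,w)\bigr|\,dy
=\mean{B_r(x)}\Bigl|\int_0^w\bigl(\bder(y,t)-\widetilde\bder(x,t)\bigr)\,dt\Bigr|\,dy
\le\int_0^{|w|}\mean{B_r(x)}\bigl|\bder(y,t)-\widetilde\bder(x,t)\bigr|\,dy\,dt,
\]
using Fubini and the fact that $\bder$ is locally bounded (so the integrand is dominated and one may exchange the order of integration). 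Each inner average tends to $0$ as $r\downarrow0$ by the definition of $\widetilde\bder$, and by local boundedness of $\bder$ the integrand in $t$ is bounded uniformly in $r$; dominated convergence then gives the claim. Part (a) is the completely analogous statement with $B_r(x)$ replaced by $B_r^\pm(x)$ and $\widetilde\bder$ replaced by the strong one-sided traces $\bder^\pm(x,t)$, which by assumption (vii) and the continuity in $t$ secured on $\mathcal N_1$ (the set built as in \cite[Prop.~3.2(ii)]{ACDD}) are defined for every $t\in\R$ when $x\in\mathcal N\setminus\mathcal N_1$; here I would invoke Corollary~\ref{lololo} already being consistent with Definition~\ref{def:tracce}, so that $\B^\pm(x,w)=\int_0^w\bder^\pm(x,t)\,dt$ are genuinely the traces of $\B(\cdot,w)$ on $\mathcal N$.

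For part (c), I would combine Proposition~\ref{kkkk}(c), Corollary~\ref{bbbb}, and \eqref{mmm}. By Proposition~\ref{kkkk}(c) we have $\Div_x\B(\cdot,w)=\bigl[\int_0^w\frac{d\Div_x\bder}{d\sigma}(x,t)\,dt\bigr]d\sigma$. Now split $\frac{d\Div_x\bder}{d\sigma}$ according to the Lebesgue decomposition of $\sigma$ and the structure \eqref{decomp}: $\sigma=\sigma^a\LLN+\sigma^s$ with $\sigma^s$ carried by a set of $\sigma$-finite $\H^{N-1}$-measure containing $\mathcal N$, and on $\mathcal N$ the jump part contributes $\beta^+(x,t)-\beta^-(x,t)=\pscal{\bder^+(x,t)-\bder^-(x,t)}{\nu(x)}$ by \eqref{mmm} and (vii). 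Integrating in $t$ term by term — legitimate because, for $\sigma$-a.e.\ $x$, the relevant Radon--Nikod\'ym derivatives exist for every $t$ (the sets $\mathcal N_2,\mathcal N_3,\mathcal N_4$) — the absolutely continuous part yields $\bigl[\int_0^w\nabla_x\bder(x,t)\,dt\bigr]d\LLN$ (using Corollary~\ref{bbbb} and writing $\frac{d\Div_x^a\bder}{d\LLN}(x,t)=\nabla_x\bder(x,t)$ with the convention in the statement), the Cantor part yields $\bigl[\int_0^w\frac{dD_x^c\bder}{d\sigma}(x,t)\,dt\bigr]d\sigma$, and the jump part yields $\pscal{\B^+(x,t)-\B^-(x,t)}{\nu(x)}\,d\hh\res\mathcal N$ after commuting $\int_0^w$ with $\pscal{\cdot}{\nu}$, since $\B^\pm(x,w)=\int_0^w\bder^\pm(x,t)\,dt$.

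The main obstacle is the justification of exchanging $\int_0^w(\cdot)\,dt$ with the disintegration operations: one must know that, outside a single $\sigma$-negligible (respectively $\LLN$-negligible) set of $x$, the densities $\frac{d\Div_x^{a,c}\bder}{d(\cdot)}(x,t)$ and the trace differences $\beta^\pm(x,t)$ exist and are measurable jointly in $(x,t)$, so that Fubini applies and the $t$-integral of a density is the density of the $t$-integral. This is precisely the content packaged into the sets $\mathcal N_1,\dots,\mathcal N_4$ and assumption (iv) (the modulus-of-continuity estimate on $\widetilde\Div_x\bder(\cdot,t)$, which, as in \cite[Section~3]{ACDD}, upgrades the pointwise-in-$t$ statements to statements valid for all $t$ off a fixed exceptional set); once these measurability and uniformity facts are in hand the term-by-term integration is routine dominated convergence, and parts (a)--(c) follow.
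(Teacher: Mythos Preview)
Your proposal is correct and follows essentially the same route as the paper. For (a) and (b) the paper's argument is identical to yours: bound the average of $|\B(y,w)-\B^\pm(x,w)|$ (resp.\ $|\B(y,w)-\widetilde\B(x,w)|$) by $\int_0^{w}\mean{B_r^\pm(x)}|\bder(y,t)-\bder^\pm(x,t)|\,dy\,dt$ (resp.\ with $\widetilde\bder$), then apply dominated convergence in $t$; for (c) the paper simply records ``Follows from (a), (b) and Corollary~\ref{bbbb}'', which is exactly the decomposition you spell out, using (a) together with Lemma~\ref{lemma:campo2} to identify the jump part. One cosmetic remark: your phrase ``here I would invoke Corollary~\ref{lololo}'' is self-referential---what you mean is that the computation in (a) \emph{establishes} that $\B^\pm(x,w)$ are the traces of $\B(\cdot,w)$ in the sense of Definition~\ref{def:tracce}, which is fine.
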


\begin{proof} 
(a) By (vii) for every $x\in {\mathcal N}\setminus\mathcal N_1$ and for every $w\in\R$ we have
\begin{equation}\nonumber%
\begin{split}
\lim_{r\downarrow 0} &
\mean{B^\pm_r(x)}\left|\int_{0}^{w}\bder(y,t)\,dt-\int_{0}^{w}\bder^\pm(x,t)\,dt\right|dy\\
\leq &\int_{0}^{w}\lim_{r\downarrow 0} \mean{B^\pm_r(x)}|\bder(y,t)-\bder^\pm(x,t)|\,dy\, \,dt=0.
\end{split}
\end{equation}

\noindent
(b) Similarly for every $x\in\R^{N}\setminus( {\mathcal N}\cup\mathcal N_1)$ and for every $w\in\R$
we have
\begin{equation}\nonumber%
\begin{split}
\lim_{r\downarrow 0} & \mean{B_r(x)}\left|\int_{0}^{w}\bder(y,t)\,dt-\int_{0}^{w}\widetilde \bder(x,t)\,dt\right|dy\\
\leq &\int_{0}^{w}\lim_{r\downarrow 0} \mean{B_r(x)}|\bder(y,t)-\widetilde \bder(x,t)|\,dy\, \,dt=0.
\end{split}
\end{equation}

\noindent
(c) Follows from (a), (b) and Corollary~\ref{bbbb}.
\end{proof}

\subsection{Main result}
\label{ss:div2}

\begin{theorem}[$\mathcal{DM}^{\infty}$-dependence]\label{chainb3}Let $\bder:\R^{N}\times\bbbr\rightarrow\bbbr^N$ be a locally
bounded Borel function satisfying conditions (i)--(vii). Then, for every $u\in BV_{\rm loc}(\R^{N})\cap L^{\infty}_{\rm loc}(\R^{N})$,
the function $\v:\R^{N}\rightarrow\bbbr^N$,
defined by
\[
\v(x):=\B(x,u(x))
\,,
\]
belongs to $\mathcal{DM}^{\infty}$ and for any
$\phi\in C_0^1(\R^{N})$ we have
\[
\begin{split}
\int_{\R^{N}} & \pscal{\nabla \phi(x)}{\v(x)}
 \, dx = {} \\
-&\int_{\R^{N}}\phi(x)\Div_x^a\B(x,u(x))\,dx
-\int_{\R^{N}}\phi(x)\frac{\Div_x^c\B}{d\sigma}(x,\widetilde u(x))\,d\sigma
\\ 
-&\int_{\R^{N}}\!\phi(x)\pscal{\bder(x,\widetilde u(x))}{\nabla u(x)}\,dx
-\int_{\R^{N}}\phi(x)\pscal{\bder(x, \ut(x))}{\frac{D^c u}{|Du|} (x)}\, d\, |Du|(x)
\\  -&
\int_{{\mathcal N\cup J_u}}\phi(x)\pscal{
	\B^+(x,u^+(x))
	-\B^-(x,u^-(x))
}{\nu(x)}
\,d\hh\,\,.
\end{split}
\]
\end{theorem}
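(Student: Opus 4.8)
The strategy is a regularization in the $x$-variable, following \cite{DCFV2}, combined with the structural results on $\B$ collected in Section~\ref{ss:prel}. First I would approximate $u$ by a sequence of smooth functions $u_\eps$ (e.g.\ by mollification) converging to $u$ strictly in $BV_{\rm loc}$ and in $L^1_{\rm loc}$, with $u_\eps \to u$ a.e.\ and $|Du_\eps| \weakstarto |Du|$. One also needs to approximate $\bder$ in the $x$-variable: set $\bder_\delta(\cdot,t) := \bder(\cdot,t)*\rho_\delta$, so that for a.e.\ $t$ the field $\bder_\delta(\cdot,t)$ is smooth in $x$, $\Div_x\bder_\delta(\cdot,t) = (\Div_x\bder(\cdot,t))*\rho_\delta$, and the corresponding $\B_\delta(x,t) = \int_0^t \bder_\delta(x,w)\,dw$ is $C^1$ in $x$ with $\Div_x\B_\delta(x,t) = \int_0^t \Div_x\bder_\delta(x,w)\,dw$. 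For the smooth data $(\B_\delta, u_\eps)$ one has the classical pointwise chain rule
\[
\Div\big(\B_\delta(x,u_\eps(x))\big) = \Div_x\B_\delta(x,u_\eps(x)) + \pscal{\bder_\delta(x,u_\eps(x))}{\nabla u_\eps(x)},
\]
which, tested against $\phi\in C^1_0(\R^N)$, gives an identity with no jump term.

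The core of the argument is then to pass to the limit, first in $\eps\to 0$ (with $\delta$ fixed, so $\B_\delta$ is $C^1$ in both variables) and afterwards in $\delta\to 0$. The first passage is essentially Theorem~\ref{chain ruleqwqw} (or a direct computation, since $\B_\delta$ is smooth): the volume term $\pscal{\bder_\delta(x,u_\eps)}{\nabla u_\eps}\,\LLN$ splits in the limit into an absolutely continuous contribution $\pscal{\bder_\delta(x,\widetilde u)}{\nabla u}\,\LLN$, a Cantor contribution $\pscal{\bder_\delta(x,\widetilde u)}{\frac{D^cu}{|Du|}}\,|Du|$, and a jump contribution on $J_u$ of the form $\big(\B_\delta(x,u^+)-\B_\delta(x,u^-)\big)\cdot\nu_u$; the term $\Div_x\B_\delta(x,u_\eps)$ converges, using the modulus-of-continuity control (iv) inherited by $\B_\delta$ via Corollary~\ref{bbbb} and Corollary~\ref{lololo}, to $\Div^a_x\B_\delta(x,u)\,\LLN + \frac{\Div^c_x\B_\delta}{d\sigma}(x,\widetilde u)\,d\sigma + \pscal{\B_\delta^+(x,u(x)) - \B_\delta^-(x,u(x))}{\nu}\,\hh\res\mathcal N$ — but here $\B_\delta$ is continuous across $\mathcal N$ since it is a mollification, so that last term vanishes at level $\delta>0$. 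What survives after the $\eps$-limit at fixed $\delta$ is thus
\[
\int \pscal{\nabla\phi}{\B_\delta(x,u)}\,dx = -\int\phi\,\Div^a_x\B_\delta(x,u)\,dx - \int\phi\,\tfrac{\Div^c_x\B_\delta}{d\sigma}(x,\widetilde u)\,d\sigma - \int\phi\,\pscal{\bder_\delta(x,\widetilde u)}{\nabla u}\,dx - \int\phi\,\pscal{\bder_\delta(x,\widetilde u)}{\tfrac{D^cu}{|Du|}}\,d|Du| - \int_{J_u}\phi\,\pscal{\B_\delta^+(x,u^+) - \B_\delta^-(x,u^-)}{\nu}\,d\hh,
\]
where on $J_u$ the traces $\B_\delta^\pm$ coincide with the two-sided traces induced by $\nu_u$.

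For the final passage $\delta\to 0$, one uses: $\B_\delta(\cdot,t)\to\B(\cdot,t)$ in $L^1_{\rm loc}$ and the uniform bound $\|\bder_\delta\|_\infty \le \|\bder\|_\infty$, so the left side and the $\nabla u$ and $D^cu$ terms converge by dominated convergence (using (i), the uniform-in-$x$ continuity, to handle $\bder_\delta(x,\widetilde u)\to\bder(x,\widetilde u)$ a.e.); the absolutely continuous and Cantor divergence terms converge by Corollary~\ref{bbbb} together with the weak-$*$ convergence of $\Div^a_x\bder_\delta(\cdot,t)\,\LLN$ and $\frac{\Div^c_x\bder_\delta}{d\sigma}(\cdot,t)\,\sigma$ to the corresponding measures for $\bder$, uniformly enough in $t$ thanks to (iv); and — this is the delicate point — the jump term on $J_u$, which at level $\delta$ involved only the interior traces, must in the limit produce the term $\int_{\mathcal N\cup J_u}\phi\,\pscal{\B^+(x,u^+) - \B^-(x,u^-)}{\nu}\,d\hh$ over the \emph{enlarged} set $\mathcal N\cup J_u$. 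This is where assumption (vii) on the existence of strong traces $\bder^\pm$ on $\mathcal N$, together with Lemma~\ref{lemma:campo2} and the representation \eqref{decomp}, enters: the concentration of $\Div_x\bder_\delta(\cdot,t)$ near $\mathcal N$ does not disappear in the limit but reassembles, via Corollary~\ref{lololo}(c), into the jump part $\pscal{\B^+(x,u(x)) - \B^-(x,u(x))}{\nu}\,\hh\res\mathcal N$, and one must verify that the contributions on $\mathcal N$ and on $J_u$ add correctly (with no double counting on $\mathcal N\cap J_u$, using that for $\H^{N-1}$-a.e.\ $x$ in the overlap the relevant one-sided limits of $u$ and of $\bder$ are jointly attained on the half-balls $B_r^\pm(x)$).

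\textbf{Main obstacle.} The hardest step is the $\delta\to 0$ limit of the term carrying the $x$-divergence of $\B_\delta$, specifically controlling the concentration of $\Div_x\bder_\delta(\cdot,t)$ on neighborhoods of $\mathcal N$ \emph{uniformly in $t$} and identifying its limit as the jump term $\big(\beta^+(x,t)-\beta^-(x,t)\big)\hh\res\mathcal N$ evaluated at $t=u(x)$ in the precise sense of Remark~\ref{remrem} — this is precisely what forces the a-priori trace assumption (vii) and the joint-measurability setup built in Section~\ref{ss:prel}. A secondary technical difficulty is the simultaneous handling of the $t$-dependence: all convergences in $x$ must be integrated against the distribution of $u$, so one needs the Scorza–Dragoni-type measurability and the equi-continuity in $t$ provided by (i) and (iv) to exchange limits and integrals in $t$.
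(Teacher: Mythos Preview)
Your regularization strategy coincides with the paper's (mollify $\bder$ in $x$, then pass to the limit); the extra mollification of $u$ is harmless but unnecessary, since once $\bder_\delta$ is smooth in $x$ the chain rule for $\B_\delta(x,u(x))$ with $u\in BV$ is already available---this is precisely the formula the paper writes for $\v_\epsilon$ at the start of its Step~1, citing \cite{DCFV2}.

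The gap is in the $\delta\to 0$ limit. Since $\B_\delta$ is $C^\infty$ in $x$, the measure $\Div_x\B_\delta(\cdot,t)$ is purely absolutely continuous, so your intermediate formula at fixed $\delta$ with separate ``$\Div^a_x\B_\delta$'' and ``$\Div^c_x\B_\delta/d\sigma$'' terms is ill-posed, and the claim that these converge separately to the corresponding pieces of $\Div_x\B$ is false: all three parts of the limit measure (absolutely continuous, Cantor, and jump on $\mathcal N$) emerge from the \emph{single} term $\int\phi\,\Div_x\B_\delta(x,u)\,dx$. The tool the paper uses to disentangle them---absent from your outline---is the layer-cake representation
\[
\int\phi(x)\,\Div_x\B_\delta(x,u(x))\,dx=\int_\R\sign(t)\,dt\int\phi(x)\,\chi_{\Omega_{u,t}}(x)\,\Div_x\bder_\delta(x,t)\,dx,
\]
whose $\delta$-limit yields $\int_\R\sign(t)\,dt\int\phi\,\chi^*_{\Omega_{u,t}}\,d\Div_x\bder(\cdot,t)$ with the \emph{precise representative} $\chi^*$; assumption (vii) enters exactly here, giving $\bder_\delta\to\tfrac12(\bder^++\bder^-)$ $\hh$-a.e.\ on $\mathcal N$. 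The paper's Step~2 then decomposes this term via \eqref{decomp} together with $\chi^*_{\{u>t\}}=\tfrac12(\chi_{\{u^+>t\}}+\chi_{\{u^->t\}})$ on $\mathcal N\cup J_u$ (Lemma~2.2 of \cite{DCFV2}). Finally, the correct jump density $\pscal{\B^+(x,u^+)-\B^-(x,u^-)}{\nu}$ on $\mathcal N\cup J_u$ is obtained only after combining, through an explicit algebraic identity, the contribution $\tfrac12\int_{u^-}^{u^+}(\beta^++\beta^-)\,dt$ produced by the BV chain rule in $u$ with the contribution $\tfrac12\bigl[\int_0^{u^+}+\int_0^{u^-}\bigr](\beta^+-\beta^-)\,dt$ produced by $\Div^j_x\bder$; your ``no double counting'' remark points at this issue but does not supply the computation.
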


\begin{proof}
\textsl{Step~1.} 
Let us fix a test function \(\phi\in C^1_0(\R^{N})\).
We claim that
\[
\begin{split}
\int_{\R^{N}} \pscal{\nabla \phi(x)}{\v(x)} \, dx = {} 
& -\int_{\R} dt \int_{\R^{N}} \sign(t) \chi^*_{\Omega_{u,t}} \phi(x) \, d\, \Div_x \bder(x,t)
\\ & 
- \int_{\R^{N}} \phi(x)\, \pscal{\bder(x, \ut(x))}{\nabla u(x)}\, dx
\\ & 
- \int_{\R^{N}} \phi(x)\, \pscal{\bder(x, \ut(x))}{\frac{D^c u}{|Du|} (x)}\, d\, |Du|(x)
\\  & -
\frac12\int_{\mathcal N \cup J_u} \phi(x)
\left[\int_{u^-(x)}^{u^+(x)} \left[\beta^+(x,t) + \beta^-(x,t)\right]dt\right]\, d\H^{N-1}\,,
\end{split}
\]
where 
$\Omega_{u,t}=\{x\in\R^{N}:\,t$ belongs to the segment of
endpoints $0$ and $u(x)\}$ and $\chi^*_{\Omega_{u,t}}$ is the precise representative of the $BV$ function
$\chi_{\Omega_{u,t}}$.

In order to prove the claim,
it is enough
to use a regularization argument as in \cite{DCFV2}.
More precisely, if \(\bder_\epsilon(\cdot, t) := \rho_\epsilon \ast \bder(\cdot, t)\)
denotes the standard regularization of \(\bder(\cdot, t)\),
and \(\v_\epsilon(x) := \int_{0}^{u(x)} \bder_\epsilon(x,t)\, dt\), then
\[
\begin{split}
\int_{\R^{N}} \pscal{\nabla\phi(x)}{\v_\epsilon(x)}\, dx = {} &
-\int_{\R^{N}} \phi(x) \, \int_0^{u(x)} \Div_x \bder_\epsilon(x,t)\, dt
\\ & -
\int_{\R^{N}} \phi(x)\, \pscal{\bder_\epsilon(x, u)}{\nabla u}\, dx
\\ & -
\int_{\R^{N}} \phi(x)\, \pscal{\bder_\epsilon(x, \ut(x))}{\frac{D^c u}{|Du|} (x)}\, d\, |Du|(x)
\\  & -
\int_{\mathcal N \cup J_u} \phi(x)\,
\int_{u^-(x)}^{u^+(x)} \pscal{\bder_\epsilon(x,t)}{\nu(x)}\, dt
\, d\H^{N-1}(x)\,,
\end{split}
\] 
and the claim follows by passing to the limit as \(\epsilon\to 0^+\), observing that,
by (vii),
for every \(t\in\R\) it holds
\[
\lim_{\epsilon\to 0^+} \pscal{\bder_\epsilon(x,t)}{\nu(x)} =
\frac{\beta^+(x,t)+\beta^-(x,t)}{2}
\qquad
\text{for}\ \H^{N-1}-a.e.\ x\in\mathcal{N}.
\]
(For details see the proofs of Theorems 1.1 and 3.4 in \cite{DCFV2}.)

\medskip\noindent
\textsl{Step~2.}
We assume for simplicity that $u\geq 0$.
We claim that
\begin{equation} 
\begin{split}\nonumber
&\int_{\R}\left[\int_{\R^{N}}{\rm sgn}(t)\phi(x)
\chi^*_{\Omega_{u,t}}(x)\,d\Div_x \bder((x,t)\right]
\,dt
\\ \nonumber
& =\int_{\R^{N}}\phi(x)\Div_x^a\B(x,u(x))\,dx
-\int_{\R^{N}}\phi(x)\frac{d\Div_x^c\B}{d\sigma}(x,\widetilde u(x))\,d\sigma
\\ \nonumber
&\quad +\frac12\int_{{\mathcal N\cup J_u}}\phi(x)\left[\int_{0}^{u^+(x)}[\beta^+(x,t)-\beta^-(x,t)]\,dt
+\int_{0}^{u^-(x)}[\beta^+(x,t)-\beta^-(x,t)]\,dt\right]
\,d\hh\,.
\nonumber
\end{split}
\end{equation}
By using Lemma 2.2 in \cite{DCFV2} and by the decomposition formula \eqref{decomp}, we have
\begin{equation} 
\begin{split}\nonumber
& \int_{-\infty}^{+\infty} \left[\int_{\R^{N}}{\rm sgn}(t)\phi(x)
\chi^*_{\Omega_{u,t}}\,d\Div_x\bder
(x,t)\right]
\,dt =
\int_{0}^{+\infty}\left[\int_{\R^{N}}\phi(x)\chi^*_{\{{u>t\}}}\,d\Div_x\bder(x,t)\right]\,dt 
\\ \nonumber
& =\int_{0}^{+\infty}\left[\int_{\R^{N}}\phi(x)
\chi_{\{u>t\}}\,\Div^a_x\bder(x,t)dx\right]
\,dt
+\int_{0}^{+\infty}\left[\int_{\R^{N}}\phi(x)
\chi_{\{\widetilde u>t\}}\,\frac{d\Div_x^c\bder}{d\sigma}(x,t)\,d \sigma\right]
\,dt
\\ \nonumber
&\quad +\int_{0}^{+\infty}\left[\int_{{\mathcal N}\cup J_u}\phi(x)
\frac12\left[\chi_{\{u^+>t\}}+\chi_{\{u^->t\}}\right]\,[\beta^+(x,t)-\beta^-(x,t)]d\hh\right]
\,dt
\,.
\nonumber
\end{split}
\end{equation}
By using Corollary \ref{bbbb}, the claim is proved.

\medskip
Finally,
the thesis follows from Step 1 and Step 2, observing that 
\[ 
\begin{split}
&\frac12\int_{u^-(x)}^{u^+(x)} \left[\beta^+(x,t) + \beta^-(x,t)\right]dt\\
&+ \frac12\left[\int_{0}^{u^+(x)}[\beta^+(x,t)-\beta^-(x,t)]\,dt
+\int_{0}^{u^-(x)}[\beta^+(x,t)-\beta^-(x,t)]\,dt\right]\\
& =\int_{0}^{u^+(x)}\beta^+(x,t)\,dt
-\int_{0}^{u^-(x)}\beta^-(x,t)\,dt
\,.
\qedhere
\end{split}
\]
\end{proof}

\smallskip

\begin{corollary}\label{c:div}
Let \(h\in C^1(\R)\) be a function with bounded derivative,
let \(\HH\in \mathcal{DM}^{\infty}\) and
let 
\[
{\mathcal N}=\Big\{x\in \R^{N}: \liminf_{r \to 0} \frac{|\Div \HH | (B_r(x))}{r^{N-1}}>0\Big\}.
\]
Assume that:
\begin{itemize}
\item[(a)] 
	$\mathcal N$ is a countably $\mathcal{H}^{N-1}$--rectifiable set and
	$\H^{N-1}(\mathcal N\cap K)<+\infty$
	for every compact set $K\subseteq \R^{N}$.

\item[(b)]
For every \(x\in\R^N\setminus\mathcal{N}\),
one has \(\HH(x) = \widetilde{\HH}(x)\).

\item[(c)]
The vector field \(\HH\) admits strong traces
\(\HH^\pm\) on \(\mathcal{N}\).
\end{itemize}

Then, for every $u\in BV(\R^{N})\cap L^{\infty}(\R^{N})$,
the function $\v:\R^{N}\rightarrow\bbbr^N$,
defined by
\[
\v(x):=\HH(x)h(u(x))
\,,
\]
belongs to $\mathcal{DM}^{\infty}$ and for any
$\phi\in C_0^1(\R^{N})$ we have
\[
\begin{split}
&\int_{\R^{N}} \pscal{\nabla \phi(x)}{\v(x)}
 \, dx = {} \\
-&\int_{\R^{N}}\phi(x)\Div^a\HH(x)h(u(x))\,dx
-\int_{\R^{N}}\phi(x)\frac{\Div^c\HH}{d\sigma}(x)h(\widetilde u(x))\,d\sigma
\\ 
-&\int_{\R^{N}}\!\phi(x)h^\prime(\widetilde u(x))\pscal{\HH(x)}{\nabla u(x)}\,dx
-\int_{\R^{N}}\phi(x)h^\prime(\widetilde u(x))\pscal{\HH(x)}{\frac{D^c u}{|Du|} (x)}\, d\, |Du|(x)
\\  -&
\int_{{\mathcal N\cup J_u}}\phi(x)
\left[h(u^+(x))\pscal{\HH^+(x)}{\nu(x)}-h(u^-(x))\pscal{\HH^-(x)}{\nu(x)}
\right]
\,d\hh\,\,.
\end{split}
\]
\end{corollary}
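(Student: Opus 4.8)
The plan is to deduce Corollary~\ref{c:div} from Theorem~\ref{chainb3} by applying the latter to the vector field
\[
\bder(x,t):=h'(t)\,\HH(x),\qquad (x,t)\in\R^{N}\times\R .
\]
For this choice $\B(x,t)=\int_{0}^{t}\bder(x,w)\,dw=(h(t)-h(0))\HH(x)$, hence
\[
\v(x)=\HH(x)h(u(x))=\B(x,u(x))+h(0)\,\HH(x).
\]
Since $h(0)\HH\in\DM$ with $\Div(h(0)\HH)=h(0)\Div\HH$, it suffices to write the chain rule for $\B(\cdot,u(\cdot))$ provided by Theorem~\ref{chainb3} and to add $h(0)$ times the elementary identity $\int_{\R^{N}}\pscal{\nabla\phi}{\HH}\,dx=-\int_{\R^{N}}\phi\,d\Div\HH$; in the latter we decompose $\Div\HH=\Div^{a}\HH\,\LLN+\Div^{c}\HH+\Div^{j}\HH$, and we use that $\Div^{j}\HH$ is, up to $\hh$-negligible sets, concentrated on $\mathcal N$ (by the very definition of $\mathcal N$, which collects the points of positive $(N-1)$-dimensional density of $|\Div\HH|$), so that, by hypothesis~(c) and Lemma~\ref{lemma:campo2}, $\Div^{j}\HH=\Div\HH\res\mathcal N=\pscal{\HH^{+}-\HH^{-}}{\nu}\,\hh\res\mathcal N$.

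As $u\in L^{\infty}(\R^{N})$ takes values in some compact interval $[-R,R]$, we may first replace $h$ by its affine extension outside $[-R,R]$: this changes neither $h(u(\cdot))$ nor $h'(u(\cdot))$, and makes $h'$ bounded and uniformly continuous on all of $\R$, with modulus of continuity $\omega$. With this harmless normalization, assumptions (i)--(vii) of Theorem~\ref{chainb3} are readily verified for $\bder(x,t)=h'(t)\HH(x)$: \textbf{(i)} $t\mapsto h'(t)\HH(x)$ has modulus of continuity $|\HH(x)|\,\omega$, locally uniform in $x$; \textbf{(ii)} $\bder(\cdot,t)=h'(t)\HH\in\DM$ for every $t$; \textbf{(iii)} assuming $h$ non-constant (otherwise $\v=h(0)\HH$ and the statement is the identity recalled above), $\sigma=\bigvee_{t}|\Div_{x}\bder(\cdot,t)|=(\sup_{\R}|h'|)\,|\Div\HH|$ is a Radon measure mutually absolutely continuous with $|\Div\HH|$, so the singular set $\mathcal N$ of Theorem~\ref{chainb3} coincides, up to $\hh$-null sets, with the one in~(a) (and we may simply take $\sigma=|\Div\HH|$ in the densities below); \textbf{(iv)} $|\widetilde\Div_{x}\bder(\cdot,t)(C)-\widetilde\Div_{x}\bder(\cdot,w)(C)|=|h'(t)-h'(w)|\,|\widetilde\Div\HH|(C)\le\omega(|t-w|)\,|\widetilde\Div\HH|(C)$; \textbf{(v)} is exactly~(a); \textbf{(vi)} holds because, by~(b), $\widetilde\HH(x)=\lim_{r\downarrow0}\mean{B_{r}(x)}\HH(y)\,dy$ exists and equals $\HH(x)$ for every $x\notin\mathcal N$, whence $\widetilde\bder(x,t)=h'(t)\HH(x)$; \textbf{(vii)} holds with $\bder^{\pm}(x,t)=h'(t)\HH^{\pm}(x)$ by~(c), so that $\beta^{\pm}(x,t)=h'(t)\pscal{\HH^{\pm}(x)}{\nu(x)}$.

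It then only remains to insert the explicit form of the terms of Theorem~\ref{chainb3}. By Corollary~\ref{bbbb}, $\Div^{a}_{x}\B(x,t)=(h(t)-h(0))\Div^{a}\HH(x)$ and $\frac{d\Div^{c}_{x}\B}{d\sigma}(x,t)=(h(t)-h(0))\frac{d\Div^{c}\HH}{d\sigma}(x)$; by Corollary~\ref{lololo}, $\B^{\pm}(x,w)=(h(w)-h(0))\HH^{\pm}(x)$ for $x\in\mathcal N$, while for $x\in J_{u}\setminus\mathcal N$ both one-sided traces of $\B(\cdot,w)$ equal $(h(w)-h(0))\widetilde\HH(x)=(h(w)-h(0))\HH(x)$, since there $\HH^{+}=\HH^{-}=\widetilde\HH=\HH$ by~(b); finally $\bder(x,\ut(x))=h'(\ut(x))\HH(x)$. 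Plugging these into the formula of Theorem~\ref{chainb3} for $\B(\cdot,u(\cdot))$ and adding $h(0)$ times the identity for $\int\pscal{\nabla\phi}{\HH}\,dx$, the $\pm h(0)$ contributions cancel term by term: in the absolutely continuous term $-\int\phi(h(u)-h(0))\Div^{a}\HH\,dx-\int\phi\,h(0)\Div^{a}\HH\,dx=-\int\phi\,h(u)\Div^{a}\HH\,dx$, and likewise for the Cantor term; in the jump term, on $\mathcal N$ one obtains $-\int\phi\,\pscal{(h(u^{+})-h(0))\HH^{+}-(h(u^{-})-h(0))\HH^{-}+h(0)(\HH^{+}-\HH^{-})}{\nu}\,d\hh=-\int\phi\,\pscal{h(u^{+})\HH^{+}-h(u^{-})\HH^{-}}{\nu}\,d\hh$, whereas on $J_{u}\setminus\mathcal N$ the $h(0)$-terms already cancel inside the bracket since $\HH^{+}=\HH^{-}$; the two $h'$-terms receive no $h(0)$-contribution at all. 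This produces precisely the asserted formula, and $\v=\B(\cdot,u(\cdot))+h(0)\HH\in\DM$.

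No genuine difficulty is expected, the corollary being a direct specialization of Theorem~\ref{chainb3}. The only points calling for a little attention are the bookkeeping of the additive constant $h(0)$ (equivalently, one reduces at once to the case $h(0)=0$) and the identification, up to $\hh$-null sets, of the set $\mathcal N$ built from $|\Div\HH|$ in Corollary~\ref{c:div} with the one built from $\sigma=(\sup_{\R}|h'|)|\Div\HH|$ in Theorem~\ref{chainb3} — which is immediate when $h$ is non-constant and irrelevant when it is constant.
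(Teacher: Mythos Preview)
Your proposal is correct and follows exactly the approach implicit in the paper, which states Corollary~\ref{c:div} without proof as a direct specialization of Theorem~\ref{chainb3} via the choice $\bder(x,t)=h'(t)\HH(x)$. Your verification of hypotheses (i)--(vii), the bookkeeping of the additive constant $h(0)$, and the handling of the degenerate case $h$ constant are all in order.
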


\begin{remark}
The theory of divergence--measure vector fields is due
to G.~Anzellotti \cite{Anz} 
(see also G.--Q.~Chen and
H.~Frid \cite{cf} for its generalization
and \cite{ScSc}). 
He introduced the ``dot product" of a bounded vector field $\A$, whose divergence is a Radon measure,  and the gradient $Du$ of $u\in BV(\Omega)$ through a pairing $(\A,Du)$ which defines a Radon measure. 
He also defined the normal trace of a vector field through the boundary and establish a generalized Gauss--Green formula.

Consider now $\mu=\hbox{\rm div}\,\A$ with $\A\in\DM$ and let $u\in
BV_{\rm loc}(\R^N)\cap L^\infty_{\rm{loc}}(\R^N)$.
The distribution defined by the following expression
\[
\langle(\A,Du),\varphi\rangle=-\int u^*\varphi\,d\mu-\int
 u\A\cdot\nabla\varphi,\quad \varphi\in C_0^\infty(\R^N)\,,
\]
is actually a Radon measure and its total variation $\vert (\A, Du) \vert$ is absolutely
continuous with respect to the measure $\vert Du \vert$.
Therefore the following Anzellotti formula holds
\begin{equation}\label{Anzellotti}
\Div(u\A)=u^*\Div\A+(\A,Du).
\end{equation}
in the sense of measures.
Using the distributional normal trace
defined in \eqref{f:ntrace},
the following Green formula holds
\begin{equation}\label{GreenI}
\int_{\Omega} u^* \, d\mu + \int_{\Omega} (\A, Du) =
\int_{\partial \Omega} 
\text{Tr}(\A, \partial\Omega) \,
u \ d\mathcal H^{N-1}\,.
\end{equation}
We remark that,
if we apply \eqref{GreenI} to a vector field $\A \in \mathcal{DM}^{\infty}(\Omega)$ and the constant $u\equiv 1$, since $(\A, Du)=0$ we obtain
\begin{equation}\label{GreenII}
\int_{\Omega}  \Div\A =
\int_{\partial \Omega} 
\text{Tr}(\A, \partial\Omega)
\ d\mathcal H^{N-1}\,.
\end{equation}
\end{remark}

\begin{remark}\label{remrem}
The case \(u\in W^{1,1}\) in Theorem~\ref{chainb3} has been already treated
in \cite{DCFV2} (see also \cite{dcl}).
The representation formula in Theorem~\ref{chainb3} can be written as the following equality in the sense of measures
\begin{equation}\label{lolo}
\begin{split}
\Div(\B(x,u(x)))=&\left.\Div_x\B(x,t)\right|_{t=u(x)}\\
+&\pscal{\bder(x, \ut(x))}{\widetilde Du}
+\pscal{
	\B^*(x,u^+)
	-\B^*(x,u^-)
}{\nu}
\hh\res {\mathcal N\cup J_u}
\end{split}
\end{equation}
with the compact notation
$$
\left.\Div_x \B(\cdot,t)\right|_{t=u(x)}=\frac12\left[\Div_x \B(\cdot,u^+(x))+\Div_x \B(\cdot,u^-(x))\right]\,.
$$
\end{remark}

\begin{remark}
The formula in Corollary~\ref{c:div} can be written as the following equality in the sense of measures:
\[
\Div(\HH(x)h(u(x)))=\Div\HH(x)\,h(u)^*(x)+h^\prime(u(x))\pscal{\HH(x)}{Du}.
\]
If $u\in BV$ and $u\HH\in \mathcal{DM}^{\infty}$, then $$\pscal{\HH(x)}{Du}=-u^* \Div\HH+\Div(u\HH).$$
Hence the following formula holds
\[
\begin{split}
\Div(\HH(x)h(u))=&\Div\HH\,h(u)^*+h^\prime(u)[-u \Div\HH+\Div(u\HH)]
\\
=&[h(u)^*-uh^\prime(u)]\Div\HH+h^\prime(u)\Div(u\HH).
\end{split}
\]
(For a similar formula when $u$ is not a function of bounded variation
see \cite{ADM}.)

If $\B(x,u(x))=\HH(x)u(x)$, then we obtain the Anzellotti formula \eqref{Anzellotti}.
\end{remark}

\section{Applications to conservation laws}
\label{s:cons}

Let us consider the multidimensional scalar conservation law with discontinuous flux
\begin{equation}\label{f:scalar}
u_t + \Div \B(x,u) = 0, 
\qquad (t,x)\in (0, +\infty)\times\R^N,
\end{equation}
where, as in the previous section,
the vector field \(\B\) is defined as in \eqref{defB} by
\(\B(x,t) := \int_0^t \bder(x,w)\, dw\).
Here the Borel function \(\bder\colon\R^N\times\R\to\R^N\)
satisfies slightly stronger assumptions compared with
(i)--(vii) of Section~\ref{s:div}.

We are interested in proving the Kato contraction property for
entropy solutions, in the sense of Definition~\ref{d:entrsol}, of \eqref{f:scalar},
see Theorem~\ref{thm:uniquenessentropic}.
We stress here that, as in \cite{CDD}, our definition of entropy solution is restricted to
BV functions.

For the sake of completeness we collect the assumptions on \(\bder\) here.
(A prime denotes the fact that the assumption has been modified
with respect to the corresponding one listed in
Section~\ref{ss:assumptions}.)

\begin{enumerate}
\item[(i)]  for ${\mathcal L}^N$-a.e.\ $x\in\R^{N}$ the function $\bder(x,\cdot)$ is continuous in $\bbbr$
uniformly w.r.t.\ $x$;
\item[(ii')] for $\LLU$-a.e.\ $t\in
\bbbr$ the function $\bder(\cdot,t)$ belongs to $\mathcal{SDM}^{\infty}$;
\item[(iii)] the measure 
$\sigma$ defined in \eqref{defsigmabarra}
is a Radon measure.
\item[(iv')] 
there exists a function \(g_1\in L^1_{\rm loc}(\R^N)\) such that
\[
|\Div^a_x \bder(x,t)-\Div^a_x \bder(x,w)|\leq
g_1(x) \, |t-w|
\]
for all $t,w\in \R$ and $x\in\R^{N}$;

\item[(v')] 
the set $\mathcal N$, defined in \eqref{f:N}, is a countably $\mathcal{H}^{N-1}$--rectifiable set and
$\H^{N-1}(\mathcal N)<+\infty$;

\item[(vi)] for every $t\in\R$ and for \(\H^{N-1}\) almost every $x\in\R^{N}\setminus \mathcal N$  there exists the limit~\eqref{f:vi};

\item[(vii)] for every \(t\in\R\), 
the function \(\bder(\cdot, t)\) admits (strong) traces
\(\bder^\pm(\cdot,t)\) on \(\mathcal{N}\).
\end{enumerate}
As in Section~\ref{s:div}, we shall use the notation 
\(\beta^\pm(x,t) := \pscal{\bder^\pm(x,t)}{\nu(x)}\),
\(t\in\R\), \(x\in\mathcal{N}\).

In our context, 
Theorem~\ref{chainb3} reads as follows: 
For every \(u\in BV(\R^{N})\) the composite function \(\v(x)=\B(x ,u(x))\) belongs to  \(\mathcal{DM}^{\infty}\) with
\begin{equation}\label{eq:stimab}
|\Div \v|\le \sigma+M|Du|
\end{equation}
and 
\begin{align}
\label{eq:chainrule1b}
\widetilde{\Div}\, \v & =
\Div_x^a \B (x,\widetilde u(x))\, \Leb{N}+
\pscal{\bder (x,\widetilde u(x))}{\widetilde{D} u}
\\
\label{eq:chainrule2b}
\Div^j \v &= 
\pscal{\B^+(x,u^+(x))-\B^{-}(x,u^-(x))}{\nu(x)}\,\mathcal H^{N-1}\res (J_u\cup \mathcal N).
\end{align}

\begin{remark}\label{lincei}
Let us  point out that our hypotheses include (and actually are modeled on) the case \(\B(x,u)=\widehat\B (k(x),u)\) where \(k\in S\mathcal{DM}^{\infty}\cap L^\infty(\R^{N}; \R^{N}) \), \(\H^{N-1}(J_{k})<+\infty\) and 
\(\widehat \B\in C^1(\R^{N}\times \R,\R^{N})\cap{\rm Lip} (\R^{N}\times \R,\R^{N}) \).
\end{remark}

\begin{remark}\label{r:triang}
	The assumption (vii) of existence of traces 
	may appear
	too strong to be useful for applications.	
	On the other hand, a situation we have in mind is the following.
	Let us consider the system
	\begin{equation}\label{f:triang}
	\begin{cases}
	\Div \A_1(u) = 0,\\
	\Div \A_2(u,v) = 0,
	\end{cases}
	\end{equation}
	where \(u,v\colon \R^{N}\to \R\),
	and the fluxes
	\(\A_1\colon\R\to\R^{N}\), \(\A_2\colon \R\times\R\to\R^{N}\)
	are regular functions.
	In general, a solution \(u\) of the first equation need not be of bounded variation.
	Nevertheless, if \(\A_1\) is genuinely nonlinear, then
	\(u\) has a quasi-BV structure, in the sense of
	De Lellis, Otto and Westdickenberg (see \cite{DLOW}).
	In particular, there exists a $\mathcal{H}^{N-1}$--rectifiable set \(\mathcal{N}\)
	such that
	\(u\) has left and right traces \(u^\pm\) on \(\mathcal{N}\),
	and it has vanishing mean oscillation at every 
	\(x\not\in\mathcal{N}\).
	The second component \(v\) is then a solution of the equation
	\[
	\Div \B(x, v) = 0
	\]
	where \(\B(x, v) := \A_2(u(x), v)\).
	The vector field \(\B\) admits traces on \(\mathcal{N}\), given by
	\(\B^{\pm}(x, v) = \B(u^\pm(x), v)\), \(x\in \mathcal{N}\), \(v\in\R\).
	In particular, assumption (vii) is satisfied.
	Unfortunately, the quasi-BV structure of \(u\)
	is not enough to have Assumption~\ref{ss:assumptions} (vii) satisfied,
	since \(\B(\cdot, v)\) is only of bounded mean oscillation at every
	point \(x\not\in \mathcal{N}\).
	Nevertheless, we think that our analysis can be a good starting point
	in order to obtain uniqueness results for the Cauchy problems
	related to the evolutionary version of the triangular system \eqref{f:triang}.
\end{remark}


\subsection{Uniqueness of entropy solutions}

\begin{definition}[Convex entropy pair]
We say that $(S,\eeta)$ is a convex entropy pair
if $S\in C^2(\R)$ is a convex function,
and $\eeta = (\eta_1, \ldots, \eta_N)$ is defined by
\begin{equation}\label{f:eta}
\eta_{i}(x,v):=\int_{0}^{v} b_{i}(x,w)S'(w)dw\,,
\qquad i=1,\ldots,N.
\end{equation}
\end{definition}
In the above definition and in the sequel, 
\(b_i=\bder\cdot e_i\) are the components of \(\bder\).

Note that according to the previous discussion, \(\eeta(\cdot, v) \in S\mathcal{DM}^{\infty}\) for every \(v\in \R\) and its divergence is given by
\[
\begin{split}
\Div_x\eeta(\cdot,v)&=
\left(\int_0^v \Div^a_x\bder(x,w)\,S'(w)dw\right) \Leb{N}
\\ &
+\left(\int_0^v 
(\beta^+(x,w) - \beta^-(x,w))\, S'(w)\, dw
\right) \,\mathcal H^{N-1}\res \mathcal N\,.
\end{split}
\]

\begin{definition}[Entropy solutions]\label{d:entrsol}
A function 
\[
u\in C([0,T]; L^1(\R^{N})) \cap L^{\infty}((0,T)\times \R^{N})\cap 
BV((0,T)\times\R^N)
\]
is an entropy solution of \eqref{f:scalar}
if 
$u$ is a solution to \eqref{f:scalar} in the sense of distributions,
and there exists a  (everywhere defined) Borel representative
$\uh$ of $u$ with \(|\uh(t,x)|\le \|u\|_{\infty}\) such that, for every 
convex entropy pair $(S,\eeta)$,
one has
\begin{equation}\label{f:distr}
\begin{split}
\partial_t S(u)&+\Div \big(\eeta(x,u)\big)\\
&-\Div \eeta(x,v)\Big|_{v=\uh(t,x)}+S'(\uh)\Div \B(x,v)\Big|_{v=\uh(t,x)}\le 0
\end{split}
\end{equation}
in  the distributional sense. Here, by  \(\Div \B(x,v)\big|_{v=\uh(t,x)}\)
we mean the measure whose action on a bounded and Borel function \(\varphi=\varphi(t,x)\) is given by 
\begin{multline}\label{def:misuraresto}
\int _0^T dt \int_{\R^{N}} \varphi(t,x) 
\Div^a_x \B(x,\uh(t,x)) \, dx
\\
 +\int _0^T dt \int_{\mathcal  N} \varphi(t,x)
 \pscal{\B^+(x,\uh(t,x))-\B^-(x,\uh(t,x))}{\nu(x)}
 \, d\H^{N-1}(x),
\end{multline}
and the same for \( \Div \eeta(x,v)\big|_{v=\uh(t,x)}\).
\end{definition}

With these definitions at hand we can now restate our main result:

\begin{theorem}\label{thm:uniquenessentropic}   
Let \(\bder\) satisfy
the assumptions listed at the beginning of the Section,
and let  \(u_1\) and \(u_2\) be two entropy solutions of \eqref{f:scalar}, then
\begin{equation}\label{eq:contrattivo}
\int_{\R^{N}}|u_1(T,x)-u_2(T,x)|\,dx\le \int_{\R^{N}}|u_1(0,x)-u_2(0,x)|\,dx.
\end{equation}
\end{theorem}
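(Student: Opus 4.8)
The plan is to follow the Kruzhkov doubling-of-variables scheme, adapted to the $x$-dependent flux exactly as in \cite{CDD}, where the only structural input that changes is the chain rule for $\Div\v$, now available in the present generality via Theorem~\ref{chainb3} (equations \eqref{eq:chainrule1b}--\eqref{eq:chainrule2b}). First I would take the two entropy solutions $u_1(s,x)$ and $u_2(\tau,y)$ and write the entropy inequalities \eqref{f:distr} for $u_1$ with the Kruzhkov family $S(\cdot)=|{\cdot}-k|$ (approximated by $C^2$ convex entropies and then passing to the limit, so that $S'(\cdot)=\sign(\cdot-k)$), tested against a nonnegative test function $\varphi(s,x,\tau,y)$, with $k=\uh_2(\tau,y)$ frozen; symmetrically write the inequality for $u_2$ with $k=\uh_1(s,x)$. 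The terms $-\Div\eeta(x,v)|_{v=\uh}+S'(\uh)\Div\B(x,v)|_{v=\uh}$ are precisely the ``remainder'' measures defined in \eqref{def:misuraresto}; the point of the doubling argument, as in Kruzhkov and in \cite{CDD}, is that these remainders are built so that, after adding the two inequalities and using the chain rule, all the genuinely $x$-dependent contributions (the absolutely continuous part $\Div^a_x\B$ and the jump part across $\mathcal N$) cancel \emph{pointwise} in the combined integrand.

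The second step is to add the two inequalities and exploit the cancellation. Writing $q_i(x,v,k):=\pscal{\eeta(x,v)-\eeta(x,k)\,\operatorname{sgn}(v-k)}{\cdot}$ schematically, the sum of the left-hand sides produces $\partial_s|u_1-u_2|+\partial_\tau|u_1-u_2|$ in the time variables, a flux term of the form $\Div_x q + \Div_y q$ in the space variables, plus the remainder terms. Using Theorem~\ref{chainb3} (in the form \eqref{eq:chainrule1b}--\eqref{eq:chainrule2b}) to expand $\Div\eeta(x,u_i)$ into its absolutely continuous, Cantor, and jump pieces, one checks that the Cantor parts are controlled by the modulus-of-continuity/Lipschitz hypotheses (iv$'$) and the uniform bound (H1)-type estimate $|\nabla_\z\B|\le M$, so they are dominated by $\eps\,|Du_i|$ after the usual regularization and vanish in the limit; the absolutely continuous $x$-parts and the jump-across-$\mathcal N$ parts cancel against the remainder measures \eqref{def:misuraresto} by the very definition of entropy solution, using that $\beta^\pm$ and $\Div^a_x\bder$ enter both $\Div\eeta$ and $S'\Div\B$ with matching sign structure (this is the algebraic identity displayed at the very end of the proof of Theorem~\ref{chainb3}). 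What survives is the Kruzhkov inequality
\[
\int\!\!\int \Big[|u_1-u_2|(\partial_s+\partial_\tau)\varphi + \pscal{\Phi(x,u_1,u_2)}{(\nabla_x+\nabla_y)\varphi}\Big]\,ds\,dx\,d\tau\,dy \ge 0,
\]
where $\Phi$ is the Kruzhkov entropy flux, continuous in its arguments.

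The third step is the standard Kruzhkov localization: choose $\varphi(s,x,\tau,y)=\psi\big(\tfrac{s+\tau}{2},\tfrac{x+y}{2}\big)\,\delta_h\big(\tfrac{s-\tau}{2}\big)\,\rho_h\big(\tfrac{x-y}{2}\big)$ with $\delta_h,\rho_h$ standard mollifiers, so that $(\partial_s+\partial_\tau)\varphi$ and $(\nabla_x+\nabla_y)\varphi$ only see the $\psi$-factor; let $h\to0$ and use $u_i\in C([0,T];L^1)$ together with the continuity of $\Phi$ to obtain
\[
\int_0^T\!\!\int_{\R^N}\Big[|u_1-u_2|\,\partial_s\psi + \pscal{\Phi(x,u_1,u_2)}{\nabla_x\psi}\Big]\,dx\,ds \ge 0
\]
for all nonnegative $\psi\in C^1_c((0,T)\times\R^N)$. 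Finally, choose $\psi$ approximating the indicator of a truncated backward cone $\{(s,x): 0\le s\le T,\ |x|\le R+L(T-s)\}$ with $L$ a Lipschitz bound for the flux in $u$ (finite by the assumptions in Remark~\ref{lincei}), so the lateral flux contribution has a sign; letting the cone open to all of $\R^N$ (using $u_i(s,\cdot)\in L^1$) yields $t\mapsto\int_{\R^N}|u_1(t,x)-u_2(t,x)|\,dx$ nonincreasing, which is \eqref{eq:contrattivo}. The main obstacle is the second step: one must verify, term by term across the three parts of the chain-rule decomposition, that the remainder measures \eqref{def:misuraresto} exactly annihilate the $x$-derivative contributions from $\Div\eeta(x,u_i)$ — in particular handling the traces $\B^\pm,\eeta^\pm$ on the \emph{common} rectifiable set $\mathcal N\cup J_{u_i}$, where one uses assumption (vii) and Lemma~\ref{lemma:campo2} — and that the Cantor parts are genuinely negligible; this is where the hypothesis that $\mathcal N$ is a \emph{universal} jump set, independent of $u_i$, is essential, since otherwise the jump sets of $u_1(s,\cdot)$ and $u_2(\tau,\cdot)$ would interact with distinct singular sets of the flux and the cancellation would fail.
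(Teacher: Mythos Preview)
Your route differs from the paper's: you propose a direct Kru\v{z}kov doubling-of-variables argument, whereas the paper (following \cite{CDD}) passes through a \emph{kinetic formulation}. There one first shows that entropy solutions are exactly the kinetic solutions of~\eqref{f:kinetic}, with the kinetic measure $m$ defined by~\eqref{eq:8}, and then proves the contraction at the level of $f_i=\chi(v,\uh_i)$ (Theorem~\ref{kin:uniqueness} via Proposition~\ref{uniqueness1}); Cavalieri's principle then gives~\eqref{eq:contrattivo}. The chain rule of Theorem~\ref{chainb3} enters in showing that $m$ is a positive finite measure and in deriving the intermediate inequality~\eqref{f:contr}, not in a Kru\v{z}kov cancellation step.

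There is also a genuine gap in your Step~2. You assert that the ``jump-across-$\mathcal N$ parts cancel against the remainder measures~\eqref{def:misuraresto} by the very definition of entropy solution'', but they do not. Even in the paper's argument, after combining the two inequalities and removing the doubling one is left with a nontrivial residual supported on $(0,T)\times\mathcal N$: this is exactly the term
\[
\int_0^T\!\!\int_{\mathcal N} W(u_1,u_2)\, d\H^{N-1}\, dt
\]
of Proposition~\ref{uniqueness1}, with $W$ given by~\eqref{57bis}. It does not vanish by any algebraic identity of the type you invoke; the ``matching sign structure'' is correct for the absolutely continuous part $\Div^a_x$, but on $\mathcal N$ there are four trace values $u_i^\pm$ interacting with two flux traces $\B^\pm$, and the cross-terms survive. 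Disposing of this residual---i.e.\ proving $W(u_1,u_2)\le 0$ $\H^{N-1}$-a.e.\ on $\mathcal N$---is the substantive step that separates the discontinuous-flux problem from the smooth one, and it requires the entropy condition together with the existence of the strong traces (assumption~(vii)); the paper refers to the proof of Theorem~5.1 in \cite{CDD} for this. Without that sign analysis your Step~2 produces an inequality with an uncontrolled surface term on $\mathcal N$, and the final cone argument cannot close to yield~\eqref{eq:contrattivo}.
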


\subsection{Sketch of the proof of Theorem~\ref{thm:uniquenessentropic}}

The proof follows the lines of the one of Theorem~3.5 in \cite{CDD}.
We recall here only the main points,
giving references to the details in \cite{CDD}.
As a first reduction, it is not restrictive to consider
only non-negative solutions.

\medskip\noindent
\textbf{Kinetic formulation.}
Let us consider the measure-valued vector field
\[
\boldsymbol{a}(\cdot, v) := 
(\bder(\cdot, v)\, \LLN_x \, , \, -\Div_x \B(\cdot, v)).
\]
Note that \(\boldsymbol{a}\) is a Radon measure and
\(\Div_{x,v} \boldsymbol{a} = 0\).

By a kinetic solution we mean a function
\[
u\in C([0,T]; L^1(\R^{N})) \cap L^{\infty}((0,T)\times \R^{N})\cap 
BV((0,T)\times\R^N)
\]
which is a distributional solution of \eqref{f:scalar},
and satisfies the following property:
there exists a  (everywhere defined) Borel representative
$\uh$ of $u$ with \(|\uh(t,x)|\le \|u\|_{\infty}\),
and a positive measure \(m = m(t,x,v)\) with
\(m((0,T)\times\R^{N+1}) < +\infty\), such that the function
\((t,x,v) \mapsto \chi(v, \uh(t,x))\) satisfies
\begin{equation}
\label{f:kinetic}
\partial_t  \chi( v, \uh(t,x)) +
\Div_{x,v} [ \boldsymbol{a}(x,v)  \chi(v, \uh(t,x))]
= \partial_v m(t,x,v)
\end{equation}
in the sense of distributions.
Here the function \(\chi\) is defined by
\begin{equation}\label{defchi}
\chi(v,u):=
\begin{cases}
1
&\text{if $v<u$},\\
1/2
&\text{if $v=u$},\\
0
&\text{if $u<v$}.
\end{cases}
\end{equation}

The first step consists in proving that $u$
is an entropy solution of \eqref{f:scalar} if and only if it is
a kinetic solution.

The proof is almost the same of the one of Theorem~3.9 in \cite{CDD}.
We mention only that, if $u$ is an entropy solution, the kinetic measure $m$
is defined first of all as a distribution by
\begin{equation}\label{eq:8}
\begin{split}
\pscal{m}{\psi} & =
- \int_{(0,T)\times\R^{N+1}} dt\, dx\, dv \, \partial_t \psi(t,x,v)\int_0^v \chi(w,u(t,x))\,  dw \\
& -\int_{(0,T)\times\R^{N+1}} dt\, dx\, dv \, \nabla_x \psi(t,x,v)  \int_0^v \bder(x,w) \chi(w,u(t,x)) \, dw 
\\
& + \int_{(0,T)\times\R^{N+1}} dt\, dx\, dv 
\, \B(x,v) \chi(v, \uh(t,x)) \nabla_x \psi(t,x,v).
\end{split}
\end{equation}
Reasoning as in \cite{CDD} we can prove that $m$ is a positive measure, with finite mass.

\medskip\noindent
\textbf{Kato contraction property.}
By Cavalieri's principle,
it is enough to prove the following contraction
property
(see \cite[Thm.~5.1]{CDD}).

\begin{theorem}\label{kin:uniqueness}
Let $u_1, u_2$ be two 
entropy solution of \eqref{f:scalar},
with corresponding everywhere defined Borel 
representatives $\uh_1, \uh_2$.
Setting $f_i(t,x,v) := \chi(v,\uh_i(t,x))$, $i=1,2$, we have that
\[
\int_{\R^{N+1}}\ 
|f_1-f_2|(T,x,v) 
\ dx\,dv 
\leq \int_{\R^{N+1}}\ 
|f_1-f_2|(0,x,v)
\ dx\,dv \,.
\]
\end{theorem}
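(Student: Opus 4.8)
The plan is to prove Theorem~\ref{kin:uniqueness} by the classical doubling-of-variables technique of Kru\v{z}kov, adapted to the kinetic formulation \eqref{f:kinetic}, exactly as in \cite[Sect.~5]{CDD}. First I would record the kinetic equations for the two solutions: setting $f_i(t,x,v) = \chi(v,\uh_i(t,x))$, one has $\partial_t f_i + \Div_{x,v}[\boldsymbol{a}(x,v) f_i] = \partial_v m_i$ in $\mathcal{D}'$, with $m_i\ge 0$ of finite mass. Since $\Div_{x,v}\boldsymbol{a}=0$, the transport operator $L := \partial_t + \boldsymbol{a}(x,v)\cdot\nabla_{x,v}$ is formally skew-symmetric, and the key identity to exploit is that for the sign-based quantities one has $f_i^2 = f_i$ and, more importantly, $|f_1-f_2| = f_1 + f_2 - 2f_1 f_2$ together with the product rule $L(f_1 f_2) = f_1 L f_2 + f_2 L f_1$ in a suitable weak sense.

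Next I would carry out the doubling of variables in $(t,x)$ (the velocity variable $v$ is kept single, which is the point of the kinetic method): test the equation for $f_1(t,x,v)$ against a function built from $f_2(s,y,v)$ and a symmetric mollifier $\rho_\delta(t-s)\psi_\eps(x-y)$, and symmetrically for $f_2$. Adding the two and letting the mollification parameters tend to zero, the ``diagonal'' terms coming from the singular part $-\Div_x\B(\cdot,v)$ of $\boldsymbol{a}$ must be controlled: this is precisely where the chain rule formula of Theorem~\ref{chainb3}, in the form \eqref{eq:chainrule1b}--\eqref{eq:chainrule2b} and with the remainder measure \eqref{def:misuraresto}, is used to make sense of the commutator and to show that the problematic contributions from $\Div^a_x\B$ on $\{\uh_1 = \uh_2\}$ and from the jump part on $\mathcal{N}\cup J_{\uh_1}\cup J_{\uh_2}$ combine with good signs. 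The entropy inequality \eqref{f:distr}, equivalently the sign condition $\partial_v m_i \ge 0$ read against $\chi$, guarantees that the contributions of $m_1$ and $m_2$ have the correct sign, so that one obtains
\[
\frac{d}{dt}\int_{\R^{N+1}} |f_1-f_2|(t,x,v)\,dx\,dv \le 0
\]
in the distributional sense on $(0,T)$, and integrating from $0$ to $T$ yields the claim.

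The main obstacle I anticipate is the rigorous treatment of the singular (measure-valued) part of $\boldsymbol{a}$ in the doubling argument: unlike the smooth-flux case, $\Div_x\B(\cdot,v)$ is only a measure concentrated on $\mathcal{N}$, so the ``error terms'' in the Kru\v{z}kov commutator are not obviously small and one cannot simply integrate by parts freely. The resolution is to use the strong-trace hypothesis (vii) together with Lemma~\ref{lemma:campo2} and Corollary~\ref{lololo}: the strong traces $\B^\pm$ let one pass to the limit $\eps\to 0^+$ in the regularization $\bder_\eps$ and identify the limit of $\pscal{\bder_\eps(x,t)}{\nu(x)}$ with $\B^*$ on $\mathcal{N}$, so that the jump contributions to $\frac{d}{dt}\int|f_1-f_2|$ are expressed through the genuine traces and shown to be $\le 0$ pointwise on $\mathcal{N}\cup J_{\uh_1}\cup J_{\uh_2}$. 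Since all of this is carried out in \cite{CDD} under the stronger hypothesis $\bder(\cdot,t)\in SBV$, and the only place that regularity entered there was through the chain rule for $\Div\v$, replacing it with Theorem~\ref{chainb3} gives the result with no further changes; I would therefore present the argument by indicating the necessary substitutions and referring to \cite[Sect.~5]{CDD} for the remaining computations.
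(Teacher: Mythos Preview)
Your proposal is correct and follows essentially the same route as the paper: both defer the detailed computations to \cite[Sect.~5]{CDD}, use the kinetic formulation and doubling of variables in $(t,x)$ with a single $v$, and observe that the only place the $SBV$ assumption was used in \cite{CDD} is through the chain rule for $\Div\v$, which is now supplied by Theorem~\ref{chainb3}. The paper makes the two-step structure slightly more explicit---first the intermediate inequality of Proposition~\ref{uniqueness1} with the boundary term $\int_0^T\int_{\mathcal N} W(u_1,u_2)\,d\H^{N-1}\,dt$, then the purely algebraic verification that $W\le 0$ from the entropy condition and the existence of strong traces---whereas you fold these together, but the content is the same.
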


The proof of this theorem is obtained as a consequence of
the following intermediate result
(see \cite[Prop.~5.3]{CDD}).

\begin{proposition}\label{uniqueness1}
Let $u_1, u_2$ be two 
entropy solution of \eqref{f:scalar},
with corresponding representatives $\uh_1, \uh_2$.
Setting $f_i(t,x,v) := \chi(v,\uh_i(t,x))$, $i=1,2$, we have that
\begin{equation}\label{f:contr}
\begin{split}
\int_{\R^{N+1}}\ 
|f_1-f_2|(T,x,v) 
\ dx\,dv 
&\leq \int_{\R^{N+1}}\ 
|f_1-f_2|(0,x,v)
\ dx\,dv \\
&+\int_0^T \int_{\mathcal N} W(u_1,u_2)d\H^{N-1}dt
\,,
\end{split}
\end{equation}
where 
\begin{equation}\label{57bis}
\begin{split}
 W(u_1,u_2)
  := {} 
& \pscal{\B^+(u_1^+)}{\nu}\big[-2\chi(u_1^+,u_2^+)+2\chi(u_1^-,u_2^-)\big]\\
+ &
\pscal{\B^+(u_2^+)}{\nu}\big[-2\chi(u_2^+,u_1^+)+2\chi(u_2^-,u_1^-)\big]\,.
\end{split}
\end{equation}
\end{proposition}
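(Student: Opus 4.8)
The plan is to follow the standard doubling-of-variables (Kruzhkov) technique adapted to the kinetic formulation, as in \cite{CDD}. First I would start from the kinetic formulation \eqref{f:kinetic} for the two solutions: writing $f_i=\chi(v,\uh_i)$, one has $\partial_t f_i+\Div_{x,v}[\boldsymbol a(x,v)f_i]=\partial_v m_i$ with $m_i\ge 0$ of finite mass. Since $\Div_{x,v}\boldsymbol a=0$, the equation for $f_i$ is in ``conservation form'', and the key algebraic identity is that for the sign function of the difference one has $\partial_t|f_1-f_2|+\Div_{x,v}[\boldsymbol a(x,v)|f_1-f_2|]\le \partial_v(\ldots)$ up to the kinetic defect measures. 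More precisely I would compute, in the sense of distributions, the evolution of $f_1 f_2$ and of $f_1+f_2$ (equivalently of $|f_1-f_2|=f_1+f_2-2f_1f_2$ using that $f_i\in\{0,1/2,1\}$ and that the product behaves like a ``min''); the $f_1 f_2$ term produces, after using the two kinetic equations and the symmetry of the entropy inequality, a nonnegative contribution from the measures $m_1,m_2$ which goes in the right direction, plus a term involving $\partial_v(\chi(u_1,u_2)+\chi(u_2,u_1))=\delta$-type contributions that integrate to something controllable.

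Next I would integrate this distributional inequality against a test function of the form $\phi(t)\psi_R(x)$ with $\psi_R$ a cutoff converging to $1$ on $\R^N$ and $\phi$ approximating $\one_{[0,T]}$; the term $\Div_{x,v}[\boldsymbol a |f_1-f_2|]$ integrated in $(x,v)$ over all of $\R^{N+1}$ would vanish in the limit $R\to\infty$ provided one has enough decay/integrability (here one uses $u_i\in C([0,T];L^1)$ and $\B$ locally bounded, exactly as in \cite{CDD}), except for one piece: the chain rule formula of Theorem~\ref{chainb3} tells us that $\Div_x(\B(x,u_i(x)))$ carries a jump part supported on $\mathcal N\cup J_{u_i}$, and on the universal set $\mathcal N$ the traces $\B^\pm$ do not coincide, so the divergence term does not integrate to zero there. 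This is precisely the origin of the extra surface term $\int_0^T\int_{\mathcal N}W(u_1,u_2)\,d\H^{N-1}\,dt$ in \eqref{f:contr}: carefully tracking the jump contributions of $\boldsymbol a(x,v)\chi(v,\uh_i)$ across $\mathcal N$, using Lemma~\ref{lemma:campo2} and \eqref{eq:chainrule2b} to identify the measure $\Div_x\B(\cdot,v)\res\mathcal N$ as $\pscal{\B^+-\B^-}{\nu}\,\H^{N-1}\res\mathcal N$, and then evaluating at $v=\uh_i$ in the sense of \eqref{def:misuraresto}, yields exactly the expression \eqref{57bis} after the elementary computation with the four values $\chi(u_i^\pm,u_j^\pm)$.

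The hard part will be the rigorous justification of the passage to the limit in the doubling argument in the presence of the measure-valued field $\boldsymbol a$ and of the kinetic defect measures $m_i$: one must mollify in $(t,x,v)$, apply the two kinetic equations to the mollified functions, multiply, and let the mollification parameter go to zero, checking that all the ``commutator'' terms vanish and that the products $m_i$ times (mollified) $f_j$ converge to nonnegative limits with the right sign. This is where one uses that $f_i$ takes only the three values $0,1/2,1$ and that $\chi$ is monotone in its first argument, so that the bilinear terms coming from $m_1,m_2$ are sign-definite; the argument is identical to the one carried out in detail in \cite[Sec.~5]{CDD}, the only new ingredient being that the flux field there is now merely a divergence-measure field, so that one must invoke the strong-trace hypothesis (vii) and the chain rule of Theorem~\ref{chainb3} rather than the $SBV$ chain rule of \cite{ACDD}. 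A secondary technical point is that the representatives $\uh_i$ must be chosen so that the restrictions to the $\H^{N-1}$-a.e.\ point of $\mathcal N$ are well defined and the traces $u_i^\pm$ exist; this is guaranteed by the $BV$ regularity of $u_i$ in $(t,x)$ together with assumption (v'), so that $\mathcal N$ is $\H^{N-1}$-finite and the slicing in $t$ behaves well. Once \eqref{f:contr} is established, Theorem~\ref{kin:uniqueness} follows by showing $\int_0^T\int_{\mathcal N}W(u_1,u_2)\,d\H^{N-1}\,dt\le 0$: this is the content of the next step in \cite{CDD}, where one uses the entropy inequality \eqref{f:distr} localized at the jump set to prove that the Rankine--Hugoniot/entropy conditions on $\mathcal N$ force $W\le 0$ pointwise $\H^{N-1}$-a.e., and then Theorem~\ref{thm:uniquenessentropic} is obtained from Theorem~\ref{kin:uniqueness} by integrating $|f_1-f_2|$ in $v$ and using Cavalieri's principle $\int_{\R}|\chi(v,a)-\chi(v,b)|\,dv=|a-b|$.
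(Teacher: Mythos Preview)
Your proposal is correct and follows exactly the route the paper indicates: the paper's own ``proof'' of this proposition is simply a pointer to \cite[Sec.~5]{CDD}, noting that the argument there goes through verbatim once the $SBV$ chain rule of \cite{ACDD} is replaced by the divergence--measure chain rule of Theorem~\ref{chainb3} under the strong-trace assumption (vii). Your sketch of the kinetic/doubling argument, the role of the jump contributions on $\mathcal N$ in producing the surface term $W$, and the passage to the subsequent steps (sign of $W$, Cavalieri) are all in line with that reference.
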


The proof of this proposition is long and
technical, but can be done exactly as in \cite{CDD}
using the new chain rule proved
in Section~\ref{s:div}.

Finally, Theorem~\ref{kin:uniqueness} follows
from Proposition~\ref{uniqueness1}
by proving that $W(u_1, u_2) \leq 0$
on $(0,T)\times\mathcal{N}$.
This is a purely algebraic fact that follows from the
entropy condition and the existence of traces,
see the proof of Theorem~5.1 in \cite{CDD} for details.

\bigskip
\textsc{Acknowledgments.}
The authors would like to thank 
Guido De Philippis for some useful discussions
during the preparation of the manuscript.


\end{document}